\documentclass{article}

\usepackage{amsmath,amsthm,amsfonts,amscd,amssymb, tikz-cd} 
\usepackage{verbatim}      	
\usepackage{epsfig}         	
\usepackage{url}		
\usepackage{epic,eepic,latexsym}
\usepackage{graphicx}
\usepackage{color}
\usepackage{lcd}
\usepackage{mathrsfs}
\usepackage[centering,text={15.5cm,22cm},
        marginparwidth=20mm,dvips]{geometry}
\usepackage[linktocpage]{hyperref}
\usepackage{lscape}
\usepackage{tabularx}
\usepackage{marginnote}
\usepackage[all]{xy}
\usepackage{enumitem}
\usepackage{stmaryrd}
\usepackage{tipa}
\usepackage{float}




\newtheorem{lemma}{Lemma}[section]
\newtheorem{theorem}[lemma]{Theorem}
\newtheorem{proposition}[lemma]{Proposition}

\newtheorem{conjecture}[lemma]{Conjecture}

\newtheorem{claim*}{Claim}
\newtheorem{remark}[lemma]{Remark}
\newtheorem{definition}[lemma]{Definition}

\newtheorem{cal*}{Calculation}

\title{The Model Orbit in $G_2$}
\author{Man-Wai Cheung}
\date{}

\begin{document}

\maketitle
\abstract{In this article, we decompose the ring of regular functions on the nilpotent orbit of dimension 8 for the complex $G_2$ in which every irreducible representation of $G_2$ appears exactly once. This confirms the predication of McGovern and we have shown that his proposed representation attaching to this orbit is unitarizable.}

\section{Introduction}

One of the main objectives in representation theory of reductive Lie groups is to understand their unitary duals. Various systematic constructions of irreducible unitary representations have been developed but it is still far from complete for most of the real reductive groups. Recently, attaching irreducible unitary representations to nilpotent coadjoint orbits has been a testing ground for finding new irreducible unitary representations. Unfortunately, there is a lack of a canonical construction of such association. However, reasonable guesses have been proposed according to the orbit method predictions. Among all of them, Vogan \cite{E8} gave the following conjecture concerning the attachment of irreducible unitary representations to the real nilpotent orbits with admissible data:

\begin{conjecture} \label{conj:1.1}
Let $G$ be a real reductive Lie group and $(\lambda_\mathbb{R},
\chi_\mathbb{R})$ be a nilpotent $\mathbb{R}$-admissible orbit
datum. Suppose that $(\lambda_{\theta}, \chi_{\theta})$ is the
corresponding $\theta$-admissible data and
$\mathcal{V}_{\chi_\theta}$ is the corresponding homogeneous
vector bundle over the $K_\mathbb{C}$-orbit $K_\mathbb{C} \cdot
\lambda_\theta$. If the boundary of the orbit closure $\overline{G
\cdot \lambda_\mathbb{R}}$ has codimension at least four, then
there exists an irreducible unitary representation
$\pi(\lambda_\mathbb{R}, \chi_\mathbb{R})$ such that the space of
$K$-finite vectors of $\pi(\lambda_\mathbb{R}, \chi_\mathbb{R})$
is isomorphic to the space of algebraic sections of
$\mathcal{V}_{\chi_\theta}$.
\end{conjecture}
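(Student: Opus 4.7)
The plan is to realize $\pi(\lambda_\mathbb{R}, \chi_\mathbb{R})$ explicitly as a $(\mathfrak{g}, K)$-module whose underlying $K$-module is $\Gamma := \Gamma(\mathcal{V}_{\chi_\theta})$, and then to equip it with an invariant positive-definite Hermitian form. The codimension hypothesis is to be used throughout: it guarantees that $\overline{K_\mathbb{C}\cdot\lambda_\theta}$ is normal, that algebraic sections of $\mathcal{V}_{\chi_\theta}$ on the open orbit extend uniquely to the closure, and that the ring $R\bigl(\overline{G\cdot\lambda_\mathbb{R}}\bigr)$ of regular functions agrees with its integral closure. This is what bounds $\Gamma$ in size and, more importantly, what makes the quantization step below well posed.

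First I would pin down the $K$-structure. Identifying $K_\mathbb{C}\cdot\lambda_\theta$ with $K_\mathbb{C}/K_\mathbb{C}^{\lambda_\theta}$, Frobenius reciprocity yields
$$ \Gamma \;\cong\; \bigoplus_{\tau \in \widehat{K}} \mathrm{Hom}_{K_\mathbb{C}^{\lambda_\theta}}\bigl(\chi_\theta,\ \tau|_{K_\mathbb{C}^{\lambda_\theta}}\bigr) \otimes V_\tau, $$
and the admissibility assumption on $\chi_\theta$ ensures each multiplicity space is finite-dimensional. The next and hardest step is to upgrade this $K$-module to a $(\mathfrak{g}, K)$-module. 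My approach would be through filtered quantization: construct a $G$-equivariant Dixmier-type algebra $\mathcal{A}$ equipped with a good filtration such that $\mathrm{gr}\,\mathcal{A}$ is isomorphic to $R\bigl(\overline{K_\mathbb{C}\cdot\lambda_\theta}\bigr)$, twisted by the character determined by $\chi_\theta$, and produce a quantization map $U(\mathfrak{g}) \to \mathcal{A}$ whose image acts on $\Gamma$ in a way compatible with the $K_\mathbb{C}$-equivariant structure on the bundle. The codimension-$\geq 4$ condition enters precisely here to control the Hochschild-type obstruction classes governing the deformation from $R\bigl(\overline{K_\mathbb{C}\cdot\lambda_\theta}\bigr)$ to $\mathcal{A}$ and to force such a deformation to exist and be essentially unique.

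Once a $(\mathfrak{g}, K)$-module structure is in place, I would construct the invariant Hermitian form from the real data. The Kirillov--Kostant symplectic form on $G\cdot\lambda_\mathbb{R}$, combined with the half-form information encoded in $\chi_\mathbb{R}$, provides a $G$-invariant density on the orbit; pairing against it defines a sesquilinear form on $\Gamma$. Positivity is to be tested first on a finite list of low $K$-types by direct signature computation and then propagated by monotonicity as the infinitesimal character varies away from reducibility walls. Irreducibility would follow by showing that the annihilator in $U(\mathfrak{g})$ is the maximal primitive ideal with the expected associated variety $\overline{K_\mathbb{C}\cdot\lambda_\theta}$, so that any proper submodule would have strictly smaller associated variety, contradicting the realization of $\Gamma$ as global sections of a bundle on the full $K_\mathbb{C}$-orbit.

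The main obstacle, by a wide margin, is the construction of $\mathcal{A}$ and of the compatible action on $\Gamma$; this amounts to a full orbit-method quantization in the stated generality, and essentially all of the conceptual and technical difficulty is concentrated here, while positivity and irreducibility become comparatively tractable once a well-behaved $(\mathfrak{g}, K)$-module structure is in hand. In practice one has to proceed case by case, replacing the abstract quantization by an explicit construction adapted to the orbit under study, which is precisely the route pursued in the present paper for the eight-dimensional nilpotent orbit in the complex $G_2$.
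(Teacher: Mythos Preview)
The statement you are addressing is labeled as a \emph{conjecture} in the paper, and the paper does not prove it in general. It is presented as an open conjecture of Vogan; the paper's contribution is only to verify the special case where $G$ is complex of type $G_2$ and the orbit is the eight-dimensional model orbit. So there is no ``paper's own proof'' of Conjecture~\ref{conj:1.1} against which to compare a general argument, and any proposal purporting to prove the conjecture outright must be viewed with suspicion.

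Your proposal is not a proof but a strategy outline, and you yourself identify where it breaks down: the construction of the filtered quantization $\mathcal{A}$ and of a compatible $(\mathfrak{g},K)$-module structure on $\Gamma$ is precisely the step that is not known in general, and your appeal to ``Hochschild-type obstruction classes'' being controlled by the codimension-$\geq 4$ hypothesis is not an argument but a hope. The positivity step (checking low $K$-types and ``propagating by monotonicity'') is likewise schematic; no such general mechanism is available. Your final paragraph effectively concedes all of this, so what you have written is a fair description of the shape a proof would have to take, but not a proof.

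For comparison, the paper's verification in the $G_2$ case follows a route quite different from your outline even when specialized. It first reduces Conjecture~\ref{conj:1.1} for complex $G$ to Conjecture~\ref{conj:1.2}, then computes the $K$-structure of $R(\mathcal{O})$ directly via McGovern's method (Proposition~\ref{prop:1.1} and Proposition~\ref{thm:3.1}), obtaining that every irreducible $G$-representation occurs with multiplicity one. The $(\mathfrak{g},K)$-module is not produced by any abstract quantization of the orbit but is taken off the shelf: it is McGovern's proposed spherical representation $U(\mathfrak{g}_2)/J(\rho/2)$. Its unitarity is not established by a signature or deformation argument but by identifying it, via Levasseur--Smith, with the restriction to $G_2$ of the minimal representation of $SO(7)$, whose unitarity is Huang's theorem. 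Thus even in the one case treated, the paper does not execute the general scheme you sketch; it relies on structural facts particular to this orbit.
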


The geometric motivation of this association comes from the quantization theory originated in relating the classical and quantum theories of certain mechanical systems. Traditionally, the mathematical machineries are framed on that of symplectic manifolds with Hamiltonian $G$-actions, from which such spaces can be realized as $G$-equivariant coverings of nilpotent orbits via the moment map (in fact, a $G$-equivariant Poisson structure is enough). Such theory works well in elliptic orbits, but it fails in general real nilpotent orbits due to the absence of a $G$-equivariant complex polarization. The new insight coming from the Sekiguchi correspondence enables the tranformation of a real nilpotent coadjoint orbit of $G$ to a complex nilpotent $K_\mathbb{C}$-orbit on $\mathfrak{p}$ $K$-equivariantly; where by a natural $K$-equivariant complex structure on the orbit gives a quantization of the $K$-action of the orbit. This explains why Vogan imposes conditions on the $K$-finite part of the quantization, and the essence of Conjecture \ref{conj:1.1} is that the quantization of $K$-action can be extended to the whole $G$.

The main purpose of this paper is to verify Conjecture \ref{conj:1.1} for the case $G$ being the complex simple Lie group of type $G_2$ and $G \cdot \lambda_\mathbb{R}$ being the ``model orbit'' of complex dimension $8$. In this case, we conduct a series of simplification; namely, first, since $G$ and $G \cdot \lambda_\mathbb{R}$ are complex, $K_\mathbb{C}$ can be naturally identified  with $G$ as complex groups and $G \cdot \lambda_\mathbb{R}$ and $K_\mathbb{C} \cdot \lambda_\theta$ as complex homogeneous spaces; second, the codimension assumption is automatic for complex groups. Thus Conjecture \ref{conj:1.1} is reduced into the following form.

\begin{conjecture} \label{conj:1.2}
Suppose $G$ is a complex reductive Lie group, and $X= G \cdot \lambda = G/ G(\lambda)$ is a nilpotent coadjoint orbit. Suppose we are given an irreducible $G$-equivariant local system on $X$; equivalently, an irreducible representation $\chi$, $V_{\chi}$ of the finite group $G(\lambda) / G(\lambda_0)$, or an indecomposable $G$-equivariant holomorphic vector bundle $\mathcal{V}_{\chi}$ on $X$ with flat connection. Then there is attached to $\chi$ an irreducible unitary representation $\pi (\lambda, \chi)$ of $G$. The space of $K$-finite vectors of $\pi ( \lambda, \chi)$ is isomorphic to the space of algebraic sections of the bundle $\mathcal{V}_{\chi}$. 
\end{conjecture}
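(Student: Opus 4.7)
\begin{myproof}[Proof proposal]
The plan is to reduce the conjecture to a case-by-case verification indexed by pairs $(\lambda, \chi)$, where $\lambda$ runs over representatives of nilpotent coadjoint orbits of $G$ and $\chi$ runs over irreducible characters of the finite component group $A(\lambda) := G(\lambda)/G(\lambda_0)$. The Bala--Carter classification makes this list finite for each fixed complex reductive $G$, and $\chi$ determines both the $G$-equivariant bundle $\mathcal{V}_\chi$ and the candidate module $R_\chi := \Gamma(X, \mathcal{V}_\chi)$.

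For a single pair $(\lambda, \chi)$ I would proceed in three stages. First, compute the $K$-type decomposition of $R_\chi$: identifying $K_\mathbb{C}$ with $G$ in the complex case, the Sekiguchi correspondence realises $X$ as a complex homogeneous space and Frobenius reciprocity gives $R_\chi \cong \operatorname{Ind}_{G(\lambda)}^{G}(V_\chi)$ at the level of $K$-finite vectors; a Grauert--Riemenschneider-type vanishing on a resolution of $\overline{X}$ keeps the section space well-behaved. Second, upgrade the $K$-structure to a full $(\mathfrak{g}, K)$-module. Because the associated variety of $R_\chi$ is $\overline{X}$ by construction, $R_\chi$ is a candidate unipotent representation in the sense of Barbasch--Vogan; Joseph's Goldie-rank techniques together with Vogan's associated-cycle calculus should either show $R_\chi$ is already irreducible or allow one to extract its unique irreducible quotient $\pi(\lambda, \chi)$.

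Third, and most importantly, I would establish unitarity. A $G$-invariant Hermitian form on $R_\chi$ can be produced using the $K$-invariant $L^2$ pairing of sections over the compact Sekiguchi cycle $K \cdot \lambda_\theta$; its nondegeneracy on each $K$-isotypic component follows by Schur, while positivity is tested against lowest $K$-types through Vogan's signature character and the Parthasarathy Dirac inequality. For complex groups, the Barbasch--Vogan classification of unipotent representations and its recent refinements give independent unitarity criteria, and matching $\pi(\lambda, \chi)$ to an entry on that list is the most efficient route when one applies.

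The main obstacle is precisely the positivity step: the formal construction of a candidate module and its invariant form is entirely algebraic, but deciding whether the form is positive definite is intrinsically delicate and is the reason Conjecture~\ref{conj:1.2} remains open in full generality. This is why the present paper confines itself to one tractable instance, the eight-dimensional model orbit of $G_2$: there the section space decomposes multiplicity-freely over $G$, each isotypic component is one-dimensional, and the signs of the invariant form can be read off directly, yielding the unitary representation predicted by McGovern.
\end{myproof}
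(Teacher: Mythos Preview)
Your proposal is not a proof but a program, and you acknowledge as much: the positivity step is precisely where the general argument stalls, which is why the statement is recorded in the paper as a \emph{conjecture} and not a theorem. The paper offers no proof of Conjecture~\ref{conj:1.2} in general, so there is nothing to compare at that level.

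Where your proposal does go wrong is in the final paragraph's account of how the paper handles the one case it actually treats, the eight-dimensional $G_2$ orbit. You write that because each $K$-isotypic component is one-dimensional ``the signs of the invariant form can be read off directly.'' That is not what happens. The multiplicity-one statement (Proposition~\ref{prop:2}) pins down the $K$-type content of $R(\mathcal{O})$, but unitarity is established by a completely different route: Levasseur--Smith identify $U(\mathfrak{g}_2)/J_1$ with $U(\mathfrak{so}(7))/J_0$, the minimal representation of $\mathfrak{so}(7)$; Huang's theorem (Theorem~\ref{thm:huang}) then supplies unitarity of that minimal representation, and the unitary structure is simply restricted back to $G_2$. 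No signature computation on $K$-isotypic pieces, no Dirac inequality, no lowest-$K$-type analysis is invoked. McGovern's Theorem~\ref{thm:3.5} closes the loop by matching the $K$-types of $U(\mathfrak{g}_2)/J(\rho/2)$ with the $R(\mathcal{O})$ already computed.

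So even restricted to the single orbit the paper addresses, your sketch diverges from the actual argument: the paper borrows unitarity from a larger group via an algebra isomorphism rather than verifying positivity of a Hermitian form intrinsically.
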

Then we will show in proposition \ref{prop:2} that it is the model orbit in the sense that every irreducible representations of $K$ appears with multiplicity one in its ring of regular functions. The advantage is that McGovern \cite{McG} has a proposed candidate for the attached unitary representation, namely the irreducible spherical representation with parameter one fourth the sum of the positive roots. We will show in Section \ref{sec:last} that the unitarity of this representation follows from the results of Levasseur-Smith \cite{LS} and Huang \cite{Huang}.


\section{Nilpotent orbits}

In this section, we will set up our notations and state the main theorem. We will also indicate briefly how to reduce Conjecture \ref{conj:1.1} to our main theorem.

Let $G$ be a real reductive Lie group in Harish-Chandra's class and $\mathfrak{g}_0$ be the real Lie algebra of $G$. Choose a maximal compact subgroup $K$ of $G$, or equivalently we have fixed a choice of Cartan involution $\theta$ of $G$. Then on Lie algebra level, we obtain the corresponding Cartan decomposition $\mathfrak{g}_0 = \mathfrak{k}_0 + \mathfrak{p}_0$ as the eigenspace decomposition of $\theta$. Let $K_\mathbb{C}$ be the complexification of $K$ and we adopt to the convention that linear spaces with subscript $0$ denote real vector spaces and removing the subscript denote their complexifications. A coadjoint orbit is by definition an orbit in $i\mathfrak{g}_0^*$ under the coadjoint action of $G$. The $\mathbb{R}$-nilpotent cone in $i\mathfrak{g}_0^*$ is defined to be $$\mathcal{N}^*_\mathbb{R} = \{ \lambda \in  i\mathfrak{g}_0^* |\, \mathbb{R}^+ \lambda \subset G \cdot \lambda\}.$$
The coadjoint orbits which lie inside $\mathcal{N}^*_\mathbb{R}$ are said to be nilpotent. On the other hand, there is a natural action of $K_\mathbb{C}$ on $\left(\mathfrak{g}/\mathfrak{k}\right)^*$, we can define the corresponding $\theta$-nilpotent cone to be $$\mathcal{N}^*_\theta =\{\lambda \in \left(\mathfrak{g}/\mathfrak{k}\right)^* | \,\mathbb{C}^* \lambda \subset K_\mathbb{C} \cdot \lambda\}.$$
We call an $K_\mathbb{C}$-orbit in $\left(\mathfrak{g}/\mathfrak{k}\right)^*$ $\theta$-nilpotent if it is contained in $\mathcal{N}^*_\theta$.

\subsection{Admissibile datum}
The first issue is to explain the admissibility conditions mentioned in Conjecture \ref{conj:1.1}. The notion of admissibility is introduced by Duflo (or at least for the formulation that we are adopting to). Roughly speaking, his idea of introducing admissibility condition is to replace the weaker condition, called the integral condition, in which orbit method applied nicely to those integral coadjoint orbits of compact Lie groups, for more general type of topological groups. His approach is to make use of the symplectic structure of the coadjoint orbit. Given $\lambda_\mathbb{R} \in \mathcal{N}^*_\mathbb{R}$, the orbit $\mathcal{O} = G\cdot \lambda_\mathbb{R}$ equips with natural symplectic form $\omega$. More precisely, if $G(\lambda_\mathbb{R})$ denotes the isotropy subgroup of $\lambda_\mathbb{R}$ with Lie algebra $\mathfrak{g}(\lambda_\mathbb{R})_0$, the orbit is naturally identified with $G/G(\lambda_\mathbb{R})$ whose tangent space at $\lambda_\mathbb{R}$ is $T_{\lambda_\mathbb{R}} \mathcal{O} \simeq \mathfrak{g}_0/\mathfrak{g}(\lambda_\mathbb{R})_0$. Then there is a natural non-degenerate skew-symmetric two form $\omega_{\lambda_\mathbb{R}}$ given by $$\omega_{\lambda_\mathbb{R}}(X + \mathfrak{g}(\lambda_\mathbb{R})_0, Y + \mathfrak{g}(\lambda_\mathbb{R})_0) = \lambda_\mathbb{R}\left([X,Y]\right) \qquad (X,Y \in \mathfrak{g}_0).$$ Note that the action of $G$ on the orbit $\mathcal{O}$ induces an action of $G(\lambda_\mathbb{R})$ on its tangent space $T_{\lambda_\mathbb{R}} \mathcal{O}$ at $\lambda_\mathbb{R}$, for which $\omega_{\lambda_\mathbb{R}}$ is preserved under this action. Thus we obtain a symplectic representation $\omega_{\lambda_\mathbb{R}}$ of $G(\lambda_\mathbb{R})$ $$G(\lambda_\mathbb{R}) \longrightarrow Sp(T_{\lambda_\mathbb{R}}\mathcal{O}, \omega_{\lambda_\mathbb{R}}) = Sp(\omega_{\lambda_\mathbb{R}}).$$
Observe that the real symplectic group $Sp(\omega_{\lambda_\mathbb{R}})$ has a two-fold covering group $Mp(\omega_{\lambda_\mathbb{R}})$, called the metaplectic group. The pull-back of this group over $G(\lambda_\mathbb{R})$ defines a two-fold covering group $$\widetilde{G(\lambda_\mathbb{R})} = Mp(\omega_{\lambda_\mathbb{R}}) \times_{Sp(\omega_{\lambda_\mathbb{R}})} G(\lambda_\mathbb{R})$$ of $G(\lambda_\mathbb{R})$, called the metaplectic cover of $G(\lambda_\mathbb{R})$, fitting into the exact sequence $$1 \rightarrow \{1,\epsilon\} \rightarrow \widetilde{G(\lambda_\mathbb{R})} \rightarrow G(\lambda_\mathbb{R}) \rightarrow 1.$$
We are not going to recall the general notion of admissibility here, instead as observed in \cite{E8}, for nilpotent coadjoint orbits of real reductive groups, we have the following simplified form of admissibility, which we will be taken as definition here.

\begin{definition}
Let $G$ be a real reductive group of Harish-Chandra's class. Suppose that $\lambda_\mathbb{R} \in \mathcal{N}^*_\mathbb{R}$ and $\widetilde{G(\lambda_\mathbb{R})}$ are as defined above. Then
a representation $\chi$ of $\widetilde{G(\lambda_\mathbb{R})}$ is admissible if $\chi(\epsilon)= -I$, and $\chi|_{\widetilde{G(\lambda_\mathbb{R})}_0}$ is trivial, where $\widetilde{G(\lambda_\mathbb{R})}_0$ denotes the identity component of $\widetilde{G(\lambda_\mathbb{R})}$. A pair $(\lambda_\mathbb{R}, \chi)$ consisting of a nilpotent element $\lambda_\mathbb{R} \in \mathcal{N}^*_\mathbb{R}$ and an irreducible unitary admissible representation $\chi$ of $\widetilde{G(\lambda_\mathbb{R})}$ is called a nilpotent admissible orbit datum for $G$. In this case, the nilpotent orbit of $\lambda_\mathbb{R}$ is said to be $\mathbb{R}$-admissible.
\end{definition}

When $G$ happens to be complex reductive, we have further simplification:

\begin{lemma} \label{lemma:2.3}
If $G$ is complex reductive, then the metaplectic cover $\widetilde{G(\lambda_\mathbb{R})}$ is always trivial, i.e. $$\widetilde{G(\lambda_\mathbb{R})} \simeq \{1,\epsilon\} \times G(\lambda_\mathbb{R}).$$
\end{lemma}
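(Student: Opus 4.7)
The plan is to exploit the fact that when $G$ is complex reductive, every object in sight inherits a complex structure, and the metaplectic obstruction disappears because complex symplectic groups are simply connected. More precisely, I would first observe that $G(\lambda_\mathbb{R})$ is itself a complex Lie group and that its isotropy representation on $T_{\lambda_\mathbb{R}}\mathcal{O}\cong \mathfrak{g}_0/\mathfrak{g}(\lambda_\mathbb{R})_0$ is $\mathbb{C}$-linear with respect to the complex structure $J$ inherited from $\mathfrak{g}_0$. Using $\lambda_\mathbb{R}([JX,Y])=\lambda_\mathbb{R}(J[X,Y])$ together with the fact that $\lambda_\mathbb{R}\in i\mathfrak{g}_0^*$, I would identify $\omega_{\lambda_\mathbb{R}}$ as the imaginary part of a nondegenerate $\mathbb{C}$-bilinear skew form $\omega^{\mathbb{C}}_{\lambda_\mathbb{R}}$ on the complex vector space $T_{\lambda_\mathbb{R}}\mathcal{O}$ of complex dimension $n$. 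Consequently, the isotropy map factors
\[
G(\lambda_\mathbb{R})\ \longrightarrow\ Sp(\omega^{\mathbb{C}}_{\lambda_\mathbb{R}})\ \cong\ Sp(n,\mathbb{C})\ \hookrightarrow\ Sp(\omega_{\lambda_\mathbb{R}})\ \cong\ Sp(2n,\mathbb{R}).
\]

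The proof then reduces to showing that the restriction of the metaplectic double cover $Mp(2n,\mathbb{R})\to Sp(2n,\mathbb{R})$ to the complex symplectic subgroup $Sp(n,\mathbb{C})$ is trivial. This is where the key input enters: $Sp(n,\mathbb{C})$ is simply connected. Hence the pulled-back double cover over $Sp(n,\mathbb{C})$ is either disconnected (and therefore trivializes as $\{1,\epsilon\}\times Sp(n,\mathbb{C})$) or a connected nontrivial double cover of a simply connected group, which is impossible. Pulling this splitting back further along $G(\lambda_\mathbb{R})\to Sp(n,\mathbb{C})$ yields the desired isomorphism $\widetilde{G(\lambda_\mathbb{R})}\cong \{1,\epsilon\}\times G(\lambda_\mathbb{R})$.

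The step I expect to take the most care is the first one: making precise, with the paper's conventions on $i\mathfrak{g}_0^*$ and the complexification, that $\omega_{\lambda_\mathbb{R}}$ really is the imaginary part of a complex-bilinear symplectic form, so that the image of $G(\lambda_\mathbb{R})$ lies inside $Sp(n,\mathbb{C})$ rather than merely in some real form of it. A clean way is to pick a Cartan involution-compatible real structure on $\mathfrak{g}_0$ and write $\omega_{\lambda_\mathbb{R}}$ in coordinates adapted to $J$, which is pure linear algebra but easy to get wrong. Once this identification is in place, the topological ingredient $\pi_1\bigl(Sp(n,\mathbb{C})\bigr)=1$ delivers the splitting at no cost, and no delicate cohomological computation on $G(\lambda_\mathbb{R})$ itself is required.
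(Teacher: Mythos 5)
Your proposal is correct and follows essentially the same route as the paper: both arguments factor the isotropy representation $G(\lambda_\mathbb{R})\to Sp(\omega_{\lambda_\mathbb{R}})$ through the complex symplectic group of a $\mathbb{C}$-bilinear symplectic form on the orbit, and then use the simple connectedness of $Sp(n,\mathbb{C})$ to split the pulled-back metaplectic cover. Your proposal merely spells out in more detail the linear-algebra step (realizing $\omega_{\lambda_\mathbb{R}}$ as the imaginary part of the complex form) that the paper takes for granted.
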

\begin{proof}
It follows from the fact that if $G$ is complex reductive, then there is a complex symplectic form $\Omega$ on the orbit $\mathcal{O}$ so that the real symplectic representation $G(\lambda_\mathbb{R}) \rightarrow Sp(\omega_{\lambda_\mathbb{R}})$ factors through the complex symplectic group $Sp(\Omega_{\lambda_\mathbb{R}})$, which is simply connected, i.e. $$G(\lambda_\mathbb{R}) \rightarrow Sp(\Omega_{\lambda_\mathbb{R}}) \hookrightarrow Sp(\omega_{\lambda_\mathbb{R}}).$$
As $Sp(\Omega_{\lambda_\mathbb{R}})$ is simply connected, $\widetilde{Sp(\Omega_{\lambda_\mathbb{R}})} = \{1,\epsilon\} \times Sp(\Omega_{\lambda_\mathbb{R}})$ and so
\begin{align*}
\widetilde{G(\lambda_\mathbb{R})} &= Mp(\omega_{\lambda_\mathbb{R}}) \times_{Sp(\omega_{\lambda_\mathbb{R}})} G(\lambda_\mathbb{R})\\
&\simeq \widetilde{Sp(\Omega_{\lambda_\mathbb{R}})} \times_{Sp(\Omega_{\lambda_\mathbb{R}})} G(\lambda_\mathbb{R})\\
&\simeq \{1,\epsilon\} \times G(\lambda_\mathbb{R})
\end{align*}
\end{proof}

\begin{proposition} \label{prop:2.3}
If $G$ is complex reductive, then the set of admissible data $(\lambda_\mathbb{R}, \chi)$ is in one-to-one correspondence with the set of irreducible representations of the component group $G(\lambda_\mathbb{R})/ G(\lambda_\mathbb{R})_0$ of the orbit $G \cdot \lambda_\mathbb{R}$.
\end{proposition}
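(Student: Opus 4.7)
The plan is to apply Lemma \ref{lemma:2.3} to reduce the problem of classifying admissible $\chi$ to a purely representation-theoretic statement about the direct product $\{1,\epsilon\} \times G(\lambda_\mathbb{R})$, and then unpack the two admissibility conditions in the definition one at a time.

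First I would note that by Lemma \ref{lemma:2.3} we have a splitting $\widetilde{G(\lambda_\mathbb{R})} \simeq \{1,\epsilon\} \times G(\lambda_\mathbb{R})$. Since irreducible representations of a direct product of groups are external tensor products of irreducible representations of the factors, any irreducible unitary $\chi$ of $\widetilde{G(\lambda_\mathbb{R})}$ has the form $\chi \simeq \chi_0 \boxtimes \chi_G$, where $\chi_0$ is an irreducible representation of the two-element group $\{1,\epsilon\}$ (so one-dimensional, either trivial or the sign character), and $\chi_G$ is an irreducible representation of $G(\lambda_\mathbb{R})$.

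Next I would translate the two admissibility conditions through this decomposition. The condition $\chi(\epsilon) = -I$ is equivalent to $\chi_0$ being the sign character, so $\chi_0$ is forced and contributes no data. To handle the second condition, I would identify the identity component of the metaplectic cover: because $\{1,\epsilon\}$ is discrete, the identity component of $\{1,\epsilon\} \times G(\lambda_\mathbb{R})$ is $\{1\} \times G(\lambda_\mathbb{R})_0$. Hence the requirement that $\chi$ be trivial on $\widetilde{G(\lambda_\mathbb{R})}_0$ is equivalent to $\chi_G|_{G(\lambda_\mathbb{R})_0}$ being trivial, which is the same as saying $\chi_G$ factors through the component group $G(\lambda_\mathbb{R})/G(\lambda_\mathbb{R})_0$.

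Combining the two steps, the assignment $\chi \mapsto \chi_G$ (viewed as a representation of the component group) yields the desired one-to-one correspondence, with inverse pulling back a component-group representation via $\widetilde{G(\lambda_\mathbb{R})} \to G(\lambda_\mathbb{R}) \to G(\lambda_\mathbb{R})/G(\lambda_\mathbb{R})_0$ and tensoring with the sign character of $\{1,\epsilon\}$. There is essentially no obstacle once Lemma \ref{lemma:2.3} is in hand; the only point that needs a moment's care is the identification of the identity component of the direct product as the product of identity components, so that the two admissibility conditions genuinely decouple across the two factors.
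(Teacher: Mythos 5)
Your proof is correct and follows essentially the same route as the paper: both invoke Lemma \ref{lemma:2.3} to split the metaplectic cover and then strip off the sign character of $\{1,\epsilon\}$, reducing the two admissibility conditions to the statement that the remaining factor descends to the component group. The only cosmetic difference is that you phrase this as an external tensor product decomposition of irreducibles of a direct product, while the paper phrases it as tensoring with the distinguished admissible representation $\sigma\otimes 1$; the content is the same.
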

\begin{proof}
From Lemma \ref{lemma:2.3}, we see that $\widetilde{G(\lambda_\mathbb{R})}$ is trivial. Note that there is an obvious irreducible unitary admissible representation, namely $\sigma \otimes 1$, where $\sigma$ is the sign representation of $\{1,\epsilon\}$. Then for any other irreducible admissible representation, tensoring with $\sigma \otimes 1$ yields a representation which descends to $G(\lambda_\mathbb{R})$ and is trivial on $G(\lambda_\mathbb{R})_0$, thus well-define an irreducible representation of the component group. Clearly such a construction can be reversed easily by lifting the representation from the component group to that of $\widetilde{G(\lambda_\mathbb{R})}$ and then tensoring with $\sigma \otimes 1$.
\end{proof}

\begin{remark}
The same conclusion as in Proposition \ref{prop:2.3} is also valid for more general $G$ as long as one has a one-dimensional unitary admissible representation of $\widetilde{G(\lambda_\mathbb{R})}$. The proof goes in exactly the same line.
\end{remark}

Now we establish a corresponding notion of admissibility for nilpotent $K_\mathbb{C}$-orbits in $\left(\mathfrak{g}/\mathfrak{k}\right)^*$. Let $\lambda_\theta \in \mathcal{N}^*_\theta$ and $\mathcal{O}_\theta$ be the corresponding orbit. Similar to the above situation, the isotropy group $K_\mathbb{C}(\lambda_\theta)$ at $\lambda_\theta$ acts on the tangent space $T_{\lambda_\theta} \mathcal{O}_\theta \simeq \mathfrak{k}_\mathbb{C}/\mathfrak{k}_\mathbb{C}(\lambda_\theta)$ at $\lambda_\theta$ and thus there is an induced action of $K_\mathbb{C}(\lambda_\theta)$ on the top exterior power of the cotangent space, which gives rise to an algebraic character $\gamma: K_\mathbb{C}(\lambda_\theta) \rightarrow \mathbb{C}^*$ defined by $$\gamma(k) = \det \left( \left. Ad^*(k)\right|_{\left(\mathfrak{k}_\mathbb{C}/ \mathfrak{k}_\mathbb{C}(\lambda_\theta)\right)^*} \right) \qquad \left(k \in K_\mathbb{C}(\lambda_\theta)\right).$$

\begin{definition}
With the above notations, a representation $\chi$ of $K_\mathbb{C}(\lambda_\theta)$ is said to be $\theta$-admissible if $d\chi = \frac{1}{2} d \gamma(\lambda_\theta)\cdot I.$ A pair $(\lambda_\theta, \chi)$ consisting of a nilpotent element $\lambda_\theta \in \mathcal{N}^*_\theta$ and an irreducible unitary admissible representation $\chi$ of $K_\mathbb{C}(\lambda_\theta)$ is called a nilpotent $\theta$-admissible datum for $K_\mathbb{C}$. In this case, the nilpotent orbit is called $\theta$-admissible.
\end{definition}

The admissible data of $G$ corresponds nicely to the $\theta$-admissible data of $K_\mathbb{C}$ through the Vergne's version of the Sekiguchi correspondence, we summerize this result in the foolowing theorem.

\begin{theorem} \label{thm:2.6}
Suppose that $G$ is a real reductive Lie group. Adopting to the above notations, there is a natural one-to-one correspodence between the $G$-orbits on $\mathcal{N}_\mathbb{R}^*$ and the $K_\mathbb{C}$-orbits on $\mathcal{N}_\theta^*$ for which each corresponding pairs of orbits are $K$-equivariantly diffeomorphic. Moreover, if $\lambda_\mathbb{R}\in \mathcal{N}_\mathbb{R}^*$ and $\lambda_\theta \in \mathcal{N}_\theta^*$ are the corresponding elements under the diffeomorphism, there is a natural bijection from the admissible representations of $\widetilde{G(\lambda_\mathbb{R})}$ to the $\theta$-admissible representations of $K_\mathbb{C}(\lambda_\theta)$.
\end{theorem}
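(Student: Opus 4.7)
The plan is to establish Theorem \ref{thm:2.6} in two stages: first the orbit correspondence, then the matching of admissible data on the two sides. For the first stage I would invoke the Kostant--Sekiguchi correspondence, realized concretely by choosing, for each $\lambda_\mathbb{R} \in \mathcal{N}_\mathbb{R}^*$, a Jacobson--Morozov triple $(H,E,F)$ with $E$ corresponding to $\lambda_\mathbb{R}$ under a chosen invariant form identifying $i\mathfrak{g}_0^*$ with $\mathfrak{g}_0$, normalized so that the triple is a Cayley image of a standard $\mathfrak{sl}_2$-triple with $H \in i\mathfrak{k}_0$ and $E,F \in \mathfrak{p}_0$. The element $\lambda_\theta$ is then the image of $E$ in $(\mathfrak{g}/\mathfrak{k})^*$. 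The bijectivity on orbits and its independence of the chosen triple follow from the standard Kostant--Rallis / Sekiguchi arguments; $K$-equivariance is built into the construction.

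For the $K$-equivariant diffeomorphism of corresponding orbits I would use Vergne's explicit construction. One writes down a smooth map $G \cdot \lambda_\mathbb{R} \to K_\mathbb{C} \cdot \lambda_\theta$ obtained from the Iwasawa/Cayley decomposition, and verifies by differentiation along the $\mathfrak{sl}_2$-triple that the differential is an isomorphism; Vergne then shows the map is a $K$-equivariant diffeomorphism by providing an explicit inverse. The main technical point here is verifying that the map descends from $G$ to the orbit level, i.e.\ that it intertwines the $G$-stabilizer of $\lambda_\mathbb{R}$ with the $K_\mathbb{C}$-stabilizer of $\lambda_\theta$ up to $K$-conjugacy; this is the content of the Kostant--Sekiguchi compatibility of stabilizers.

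For the second stage, the bijection between admissible representations, I would follow Vergne's refinement: the Cayley transform identifies the real symplectic tangent space $T_{\lambda_\mathbb{R}}\mathcal{O}$ carrying $\omega_{\lambda_\mathbb{R}}$ with a complex symplectic space built from $T_{\lambda_\theta}\mathcal{O}_\theta$ and its dual, in a way that intertwines the actions of the isotropy groups. Under this identification, the metaplectic cover $\widetilde{G(\lambda_\mathbb{R})}$ gets identified with a metaplectic cover of $K_\mathbb{C}(\lambda_\theta)$ whose splitting character is precisely the square root of $\gamma$ appearing in the $\theta$-admissibility definition. Consequently the condition $\chi(\epsilon) = -I$ with trivial restriction to the identity component is matched with $d\chi = \tfrac12 d\gamma(\lambda_\theta)\cdot I$.

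The hardest step will be the last one: pinning down the character $\gamma^{1/2}$ on the $K_\mathbb{C}$ side as the precise image of the metaplectic sign character under Vergne's Cayley identification of symplectic spaces, and confirming that this identification carries irreducible unitary representations to irreducible unitary representations bijectively. This is essentially the content of Vergne's theorem; I would cite it at this point rather than reprove the symplectic linear-algebra computation. The rest of the argument reduces to bookkeeping with isotropy groups and their component groups, which transfer correctly because the diffeomorphism of stage one is $K$-equivariant and respects stabilizers.
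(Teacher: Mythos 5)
The paper gives no proof of Theorem \ref{thm:2.6}: it is stated only as a summary of Vergne's version of the Sekiguchi correspondence (in the spirit of \cite{E8}), so there is no internal argument to compare yours against. Your outline is the standard one---Cayley-normalized Jacobson--Morozov triples for the orbit bijection, Vergne's explicit $K$-equivariant diffeomorphism, and the transfer of admissible data via the identification of symplectic tangent spaces and metaplectic covers---and since you, like the paper, ultimately cite Vergne for the decisive symplectic identification, your proposal is consistent with (and somewhat more detailed than) the paper's treatment.
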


With Proposition \ref{prop:2.3} in mind, we would further simplify Conjecture \ref{conj:1.1} into complex reductive group case.

Note that $G$ is complex means that there is a natural isomorphism $\psi$ between $G$ and the complexification $K_{\mathbb{C}}$ of $K$. As the action of $K$ on $(\mathfrak{g}/ \mathfrak{k})^*$ respects the complex structure, it extends naturally and uniquely to a holomorphic action of $K_{\mathbb{C}}$. The isomorphism $\psi$ takes the action of $G$ to the action of $K_\mathbb{C}$. In particular, $\psi : \mathcal{N}^*_\mathbb{R} \cong \mathcal{N}^*_{\theta}$. This is a $K$-equivariant diffeomorphism carrying $G$ orbit to $K_{\mathbb{C}}$ orbits. By Lemma \ref{lemma:2.3}, the metaplectic covers were all trivial in this case, so that $\mathbb{R}$-admissible orbit data were identified with $G$-equivariant local systems on orbits. Similarly, all $K_{\mathbb{C}}$ orbits are symplectic, so that characters $\gamma(k)$ are all trivial, and $\theta$-admissible orbit data are identified with $K_{\mathbb{C}}$-equivariant local systems on orbits. And since all orbits have even complex dimensions, the codimension conditions in Conjecture \ref{conj:1.1} are automatic. Thus we obtain Conjecture \ref{conj:1.1} in the form of Conjecture \ref{conj:1.2} in complex reductive group case. 

In our case, the nilpotent orbit $\mathcal{O}$ of dimension 8 in $G_2$, it is shown that the fundamental group $\pi_1(\mathcal{O})$ is trivial. As $\pi_1(\mathcal{O})$ is isomorphic to $G(\lambda) / G(\lambda)_0$, $\chi$ is trivial and $V_{\chi}= \mathbb{C}$. Thus, the space of algebraic sections of the bundle $\mathcal{V}_{\chi} = K_\mathbb{C} \times_{K_{\mathbb{C}} (\lambda)} \mathbb{C}$ is just the ring of regular functions, $R(\mathcal{O})$,  on the orbit $\mathcal{O}$. Hence, we can reduce to calculate $R(\mathcal{O})$.

\subsection{Calculation of rings of functions on nilpotent orbits} \label{sec:2}
To calculate the $K$-multiplicities in representations attached to nilpotent orbits, we will use McGovern's method. McGovern works with nilpotent elements of $\mathfrak{g}$ instead of nilpotent elements of $\mathfrak{g}^*$. An invariant bilinear form on $\mathfrak{g}$ can identify $\mathfrak{g}^*$ and $\mathfrak{g}$. 

First, define
$$\mathcal{N}_{\theta} = \{ e \in \mathfrak{p} | ad(e) \text{ is nilpotent } \}.$$
An Ad$(G)$-invariant, $\theta$-invariant bilinear form on $\mathfrak{g}_0$, positive definite on $\mathfrak{p}_0$ and negative definite on $\mathfrak{k}_0$, provides an isomorphism $\mathfrak{g} \cong \mathfrak{g}^*$ that carries $\mathcal{N}_{\theta}$ $K_{\mathbb{C}}$ equivariantly onto $\mathcal{N}^*_{\theta}$. Then fix $e \in \mathcal{N}_{\theta} \subset \mathfrak{p}$. Using the Jacobson-Morozov theorem, there exists $f \in \mathcal{N}_{\theta} \subset \mathfrak{p}$ and a semisimple element $h \in \mathfrak{k}$ such that
$$[h,e] = 2e, \text{  } [h,f] =-2f, \text{  } [e,f]=h.$$
By the representation theory of $\mathfrak{sl}(2)$, $ad(h)$ has integral eigenvalues. Thus we may define for $m \in \mathbb{Z}$
$$\mathfrak{g}(m) = \{ z \in \mathfrak{g} | [h,z] = mz \},  \mathfrak{k}(m) = \mathfrak{g} (m) \cap \mathfrak{k}, \mathfrak{p}(m) = \mathfrak{g} (m) \cap \mathfrak{p}, $$
These spaces define gradings of $\mathfrak{g}$, $\mathfrak{k}$,
$\mathfrak{p}$. Define the parabolic subalgebra $\mathfrak{q}$ by
$$\mathfrak{q} = \sum_{m \geq 0} \mathfrak{k}(m), \mathfrak{u} = \sum_{m >0} \mathfrak{k}(m), \mathfrak{l} = \mathfrak{k}(0). $$
Here we define
$$L_{\mathbb{C}} = \{k \in K_{\mathbb{C}} | Ad(k)h =h \}, $$
and let $U$ be the connected subgroup with Lie algebra $\mathfrak{u}$. Write $Q= L_{\mathbb{C}} U$ for the corresponding parabolic subgroup of $K_{\mathbb{C}}$. 
Finally, we define
$$\mathfrak{o} = \sum_{m \geq 2} \mathfrak{p}(m).$$
Instead of studying the action of $K_{\mathbb{C}}$ on $K_{\mathbb{C}} \cdot e$ directly, McGovern first studies $Q \cdot e $.

Note that $Q$ acts algebraically on the smooth variety $\mathfrak{o}$, we can form the fiber product $Z= K_{\mathbb{C}} \times_Q \mathfrak{o} = (K_{\mathbb{C}} \times \mathfrak{o}) /\sim$ , where $(x,z) \sim (x',z') $ if $x = x'q $ and $z' = $ Ad$(q)z$ for some $q\in Q$. 

This space $Z$ is a smooth variety with an action of $K_\mathbb{C}$; it is the total space of a homogeneous vector bundle on the flag variety $K_\mathbb{C} / Q$. Thus
$$(x,z ) \sim (x', z') \text{ implies Ad} (x) z = \text{ Ad} (x') z'.$$
Therefore we have  an algebraic map $\pi : Z \rightarrow \mathcal{N} _{\theta}$, defined by $(x,z) \mapsto $ Ad$(x)z$.

With these setting in mind, we can now state a theorem which tells us why it is reasonable to consider $Q$ first.

\begin{theorem}
The map $\pi$ is proper and birational, with image equal to the closure of $K_\mathbb{C} \cdot e$. Consequently the algebra $R(Z)$ of regular functions on $Z$ is natually isomorphic to the normalization of $R(\overline{K_{\mathbb{C}} \cdot e})$.
Assume in addition that the boundary of the orbit closure $\overline{K_{\mathbb{C}} \cdot e}$ has complex codimension at least two. Then $R(Z)$ is naturally isomorphic to the algebra $R(K_\mathbb{C} \cdot e)$ of regular functions on the orbit.
\end{theorem}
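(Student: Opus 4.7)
The plan is to treat this as a standard \emph{collapsing of a homogeneous vector bundle} in the sense of Kempf, which splits into four distinct claims: properness of $\pi$, surjectivity onto $\overline{K_\mathbb{C}\cdot e}$, birationality, and the identification of $R(Z)$ with the normalization; the codimension-$2$ addendum is then a separate application of algebraic Hartogs.

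For properness, I would realize $Z$ as a closed subvariety of $K_\mathbb{C}/Q \times \mathfrak{p}$ through the $K_\mathbb{C}$-equivariant embedding $[x,z]\mapsto(xQ,\mathrm{Ad}(x)z)$, well-defined because $Q$ preserves $\mathfrak{o}\subseteq\mathfrak{p}$. Then $\pi$ is the restriction of the second projection to this closed subvariety, which is proper because the flag variety $K_\mathbb{C}/Q$ is projective.

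For the image and birationality, the $K_\mathbb{C}$-equivariance of $\pi$ together with $\pi([1,e])=e$ gives $K_\mathbb{C}\cdot e\subseteq\pi(Z)$, and the reverse inclusion $\pi(Z)\subseteq\overline{K_\mathbb{C}\cdot e}$ reduces to $\mathfrak{o}\subseteq\overline{K_\mathbb{C}\cdot e}$, which follows from the Kostant--Rallis density statement $\overline{Q\cdot e}=\mathfrak{o}$; since $\pi$ is proper, its image is closed, so $\pi(Z)=\overline{K_\mathbb{C}\cdot e}$. The same density gives the dimension equality $\dim Z=\dim K_\mathbb{C}/Q+\dim\mathfrak{o}=\dim\overline{K_\mathbb{C}\cdot e}$, and the $\mathfrak{sl}_2$-theoretic inclusion $\mathfrak{k}_\mathbb{C}(e)\subseteq\mathfrak{q}$ (highest-weight vectors for the Jacobson--Morozov triple $\{e,h,f\}$ living in non-negative $h$-weights) shows the fiber of $\pi$ over $e$ reduces to the single point $[1,e]$, yielding birationality.

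For the identification of $R(Z)$: $Z$ is smooth as the total space of a vector bundle over the smooth base $K_\mathbb{C}/Q$, so $\pi$ is a proper birational morphism from a normal variety onto $Y:=\overline{K_\mathbb{C}\cdot e}$. It therefore factors through the normalization $\nu:\widetilde Y\to Y$, and by Zariski's main theorem (equivalently Stein factorization applied to the induced proper birational map $Z\to\widetilde Y$) one has $\pi_*\mathcal{O}_Z=\mathcal{O}_{\widetilde Y}$. Taking global sections yields $R(Z)\cong R(\widetilde Y)$, which is by definition the normalization of $R(Y)$. If $Y\setminus K_\mathbb{C}\cdot e$ has complex codimension at least two in $Y$, its preimage in $\widetilde Y$ has the same codimension because $\nu$ is finite, so algebraic Hartogs on the normal variety $\widetilde Y$ extends regular functions on the smooth open dense orbit $K_\mathbb{C}\cdot e$ uniquely to all of $\widetilde Y$, giving $R(K_\mathbb{C}\cdot e)\cong R(\widetilde Y)\cong R(Z)$.

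The main obstacle will be the density statement $\overline{Q\cdot e}=\mathfrak{o}$, since it simultaneously supplies the inclusion $\mathfrak{o}\subseteq\overline{K_\mathbb{C}\cdot e}$ needed to pin down the image and the dimension count needed for birationality. Concretely, one must verify that $[\mathfrak{q},e]=\mathfrak{o}$, that is, that every $h$-weight space of $\mathfrak{p}$ of weight $\geq 2$ is hit by $\mathrm{ad}(e)$ applied to the appropriate weight space of $\mathfrak{k}$; this is a standard consequence of $\mathfrak{sl}_2$ representation theory applied to the Jacobson--Morozov triple, but it is the only non-formal input to the argument.
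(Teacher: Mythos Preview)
The paper does not supply a proof of this theorem: it is quoted as a known structural result underlying McGovern's method (essentially the collapsing theorem for the Jacobson--Morozov resolution), and the paper immediately moves on to use it via Theorem~\ref{thm:3.7} and Proposition~\ref{prop:1.1}. So there is no ``paper's own proof'' to compare against.

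Your argument is the standard one and is correct. The properness step via the closed embedding $Z\hookrightarrow K_\mathbb{C}/Q\times\mathfrak{p}$ and projectivity of $K_\mathbb{C}/Q$ is exactly right, as is the normalization statement via smoothness of $Z$ plus Zariski's Main Theorem, and the Hartogs argument for the codimension-two addendum. One small sharpening: rather than asserting that the whole fibre $\pi^{-1}(e)$ is a single point, it is cleaner to observe that $K_\mathbb{C}(e)\subseteq Q$ (your $\mathfrak{sl}_2$ argument) forces $Q(e)=K_\mathbb{C}(e)$, so the open $Q$-stable subset $K_\mathbb{C}\times_Q(Q\cdot e)\subseteq Z$ is identified with $K_\mathbb{C}/K_\mathbb{C}(e)=K_\mathbb{C}\cdot e$, and $\pi$ restricts to an isomorphism there; birationality follows without needing to analyse the full fibre over~$e$. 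You have also correctly isolated the one genuinely non-formal input, namely the density $\overline{Q\cdot e}=\mathfrak{o}$ (equivalently $[\mathfrak{q},e]=\mathfrak{o}$), which in the complex case is Kostant's theorem and in the symmetric-space setting is Kostant--Rallis.
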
 
Thus we can now turn into the study of $R(Z)$. Luckily, we have the following decomposition at hand.

\begin{theorem} \label{thm:3.7}
$R(Z) \cong Ind_Q^{K_\mathbb{C}} (R(\mathfrak{o}) ) \cong \sum_{k = 0}^{\infty} Ind_Q^{K_\mathbb{C}} (S^k(\mathfrak{o}^*) ).$
\end{theorem}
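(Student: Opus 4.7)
My plan is to exploit the fact that $Z=K_\mathbb{C}\times_Q\mathfrak{o}$ is by construction a $K_\mathbb{C}$-equivariant algebraic vector bundle over the projective variety $K_\mathbb{C}/Q$, with typical fiber the affine space $\mathfrak{o}$. For the first isomorphism, I would pull back regular functions from $Z$ to the smooth variety $K_\mathbb{C}\times\mathfrak{o}$ along the quotient map. A regular function on $Z$ corresponds bijectively to a regular function $\tilde f:K_\mathbb{C}\times\mathfrak{o}\to\mathbb{C}$ that is invariant under the $Q$-action $q\cdot(x,z)=(xq^{-1},\mathrm{Ad}(q)z)$, i.e.\ satisfying $\tilde f(xq,z)=\tilde f(x,\mathrm{Ad}(q)z)$ for all $q\in Q$. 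Reading $\tilde f$ as the function $x\mapsto\bigl(z\mapsto\tilde f(x,z)\bigr)$, this is precisely a $Q$-equivariant regular map $K_\mathbb{C}\to R(\mathfrak{o})$, which is the standard model for the algebraically induced representation $\mathrm{Ind}_Q^{K_\mathbb{C}}\bigl(R(\mathfrak{o})\bigr)$.

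For the second isomorphism, I would use that $\mathfrak{o}$ is a vector space, hence $R(\mathfrak{o})=S(\mathfrak{o}^{\ast})=\bigoplus_{k\ge 0}S^{k}(\mathfrak{o}^{\ast})$ as graded $Q$-modules, where the $Q$-action on $\mathfrak{o}^{\ast}$ is the one dual to the adjoint action on $\mathfrak{o}$ (which is well-defined because $\mathfrak{o}=\sum_{m\ge 2}\mathfrak{p}(m)$ is stable under $\mathrm{Ad}(Q)$, since $Q=L_\mathbb{C}U$ preserves the $h$-grading non-negatively). Since each $S^{k}(\mathfrak{o}^{\ast})$ is finite-dimensional, the algebraic induction functor $\mathrm{Ind}_Q^{K_\mathbb{C}}$ -- being the $Q$-invariants in $R(K_\mathbb{C})\otimes(-)$ -- commutes with the direct sum decomposition, giving
\[
\mathrm{Ind}_Q^{K_\mathbb{C}}\bigl(R(\mathfrak{o})\bigr)\;\cong\;\bigoplus_{k\ge 0}\mathrm{Ind}_Q^{K_\mathbb{C}}\bigl(S^{k}(\mathfrak{o}^{\ast})\bigr).
\]

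The only place requiring care is the handling of the infinite direct sum: the algebraic tensor product identity $R(K_\mathbb{C}\times\mathfrak{o})=R(K_\mathbb{C})\otimes R(\mathfrak{o})$ (with algebraic, i.e.\ finite, tensors) together with the fact that every regular function on $K_\mathbb{C}\times\mathfrak{o}$ has bounded polynomial degree in the $\mathfrak{o}$-direction locally, ensures that taking $Q$-invariants genuinely decomposes as the direct sum above. I do not anticipate any real obstacle here; the argument is essentially the standard identification of global sections of a homogeneous vector bundle with algebraically induced representations, specialized to the case where the bundle fiber is affine and the ring of functions is naturally graded by the action of the center of the Levi on $\mathfrak{o}^{\ast}$.
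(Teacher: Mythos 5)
Your proposal is correct: the identification of $R(Z)$ with $Q$-invariants in $R(K_\mathbb{C}\times\mathfrak{o})=R(K_\mathbb{C})\otimes R(\mathfrak{o})$, the equality $R(\mathfrak{o})=S(\mathfrak{o}^*)=\bigoplus_k S^k(\mathfrak{o}^*)$ as $Q$-modules (using that $\mathfrak{o}=\sum_{m\ge 2}\mathfrak{p}(m)$ is $\mathrm{Ad}(Q)$-stable), and the compatibility of algebraic induction with direct sums together give exactly the asserted decomposition. The paper itself states this theorem without proof, importing it from McGovern's method as recalled in the cited literature, and your argument is precisely the standard one underlying that statement, so there is nothing to contrast.
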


The above formula directs us to understand the induction functor from $Q$ to $K_{\mathbb{C}}$ and $S^k(\mathfrak{o}^*)$ as a representation of $Q$. First we would need the following settings.

Suppose $G$ is a connected simple complex Lie group, $Q \subset G $ is a parabolic subgroup, and $H \subset Q$ is a maximal torus. Let $U$ be the unipotent radical of $Q$ and $L$ for the Levi subgroups containing $H$. Choose a system of positive roots $\Delta^+(\mathfrak{l},\mathfrak{h}) $ for $\mathfrak{h}$ in $\mathfrak{l}$, so that we can extend it to positive roots system for $H$ in $G$. Explicitly, $\Delta^+(\mathfrak{g}, \mathfrak{h}) = \Delta^+(\mathfrak{l}, \mathfrak{h}) \cup \Delta^+(\mathfrak{u}, \mathfrak{h})$ is a positive system for $\mathfrak{h}$ in $\mathfrak{g}$. Write $X^*(H) \supset \Delta (\mathfrak{g}, \mathfrak{h})$ for the lattice of weights; $X_*(H) \supset \check{\Delta} (\mathfrak{g}, \mathfrak{h})$ for the dual lattice of coweights. If we fix an identification of the Lie algebra of $\mathbb{C}^{\times}$ with $\mathbb{C}$, then
$$X^* (H) \subset \mathfrak{h}^*, X_*(H) \subset \mathfrak{h}, $$
and these inclusions are compatible with the natural dualities
$$X^*(H) \times  X_*(H) \rightarrow \mathbb{Z}, \text{  } \mathfrak{h}^* \times \mathfrak{h} \rightarrow \mathbb{C}.$$
If we identify coweights with elements of $\mathfrak{h}$ as above, then the coroot $\check{\alpha}$ corresponding to a root $\alpha$ is the element called $h_{\alpha}$. 
A weight $\lambda \in X^*(H) $ is defined as $G$-dominant if for every positive root $\alpha \in \Delta(\mathfrak{g}, \mathfrak{h})$, $\lambda({\check{\alpha}}) \geq 0$.

Then apply the Bott-Borel-Weil Theorem on Theorem \ref{thm:3.7}, we have the following refornulation.
\begin{theorem}
Suppose $\lambda \in X^*(H)$ is $G$-dominant, $V_\lambda$ is the irreducible algebraic representation of $G$ of highest weight $\lambda$. Let $\rho$ be the half sum of positive roots,  $E$ be a finite dimensional algebraic representation of $Q$, $E_\mu$ be the irreducible representation of $L$ of highest weight $\mu$. Then the multiplicity of $V_\lambda^*$ in the virtual representation $\sum_p (-1)^p (Ind_Q^G)^p (E^*)$ is equal to 
$$\sum_w sgn(w) \cdot  (\text{multiplicity of }E_{w(\lambda + \rho_c) - \rho_c} \text{ in } E).$$
where $w\in W(\mathfrak{k}, \mathfrak{h})$ is such that $w(\lambda + \rho_c) - \rho_c$ is $L$-dominant, the $E_{w(\lambda + \rho_c) - \rho_c}$ is the irreducible representations of $L$ with highest weight $w(\lambda + \rho_c) - \rho_c$. 
\end{theorem}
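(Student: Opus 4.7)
The plan is to apply the Bott-Borel-Weil theorem to the generalized flag variety $G/Q$ after reducing to $E$ irreducible. Recall that $Ind_Q^G = \Gamma(G/Q, G \times_Q -)$ is left-exact, its higher derived functors being the sheaf cohomologies of the associated homogeneous bundle, so the Euler characteristic $\chi(F) := \sum_p (-1)^p (Ind_Q^G)^p(F)$ is additive on short exact sequences of $Q$-modules.

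First I would reduce to the case $E = E_\mu$ irreducible. Because $L$ is reductive and the unipotent radical $U$ of $Q$ acts unipotently on any algebraic $Q$-module, $E$ admits a finite filtration by $Q$-submodules whose successive quotients are irreducible $L$-representations, with $U$ acting trivially on each quotient. By additivity of $\chi$, both sides of the desired identity are additive in $E$, so it suffices to verify the formula for $E = E_\mu$. The dual $E_\mu^*$ is then again an irreducible $L$-representation inflated to $Q$.

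Next I would invoke the parabolic form of Bott-Borel-Weil applied to $E_\mu^*$: either $\mu + \rho_c$ lies on a wall of a Weyl chamber of $G$, in which case $\chi(E_\mu^*) = 0$, or there is a unique $w \in W(\mathfrak{k}, \mathfrak{h})$ making $w(\mu + \rho_c) - \rho_c$ $G$-dominant, and then $\chi(E_\mu^*) = sgn(w) \cdot V_{w(\mu + \rho_c) - \rho_c}^*$. Therefore the multiplicity of $V_\lambda^*$ in $\chi(E_\mu^*)$ equals $sgn(w)$ precisely when $\mu = w^{-1}(\lambda + \rho_c) - \rho_c$, and vanishes otherwise. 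Summing over $\mu$ weighted by the multiplicity of $E_\mu$ in $E$, and then re-indexing by $w$ (substituting $w \mapsto w^{-1}$ and using $sgn(w^{-1}) = sgn(w)$), yields exactly the claimed formula; the surviving indexing condition ``$w(\lambda + \rho_c) - \rho_c$ is $L$-dominant'' is precisely the requirement that it be a valid highest weight of some $E_\mu$. Note also that if $\lambda$ is $G$-dominant then $\lambda + \rho_c$ is strictly dominant, hence regular, so its $W(\mathfrak{k},\mathfrak{h})$-orbit never meets a wall and the singular case from BBB contributes nothing to the re-indexed sum.

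The chief technical obstacle is the correct invocation of the parabolic Bott-Borel-Weil theorem with its sign and duality conventions; once that is in place, the remainder is essentially a bookkeeping change of summation variable, exchanging the parametrization of contributions by the $L$-weight $\mu$ appearing in $E$ for a parametrization by the Weyl group element $w$ moving $\lambda + \rho_c$ into the $L$-dominant chamber.
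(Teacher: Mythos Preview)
Your proposal is correct and follows exactly the approach the paper indicates: the paper states this theorem without proof, merely remarking that it is obtained by applying the Bott--Borel--Weil theorem to the preceding induction formula. Your argument supplies precisely those missing details---the reduction to irreducible $E$ via a $Q$-stable filtration, the invocation of parabolic Bott--Borel--Weil on each $E_\mu^*$, and the re-indexing from $\mu$ to $w$---and the bookkeeping with signs, duality, and the regularity of $\lambda+\rho_c$ is handled correctly.
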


Even better, we have the following vanishing theorem.
\begin{theorem} \label{thm:3.12}
If $G$ is complex, then $ (Ind_Q^G)^p (S(\mathfrak{o})^*) = 0 $ for all $p >0$.
\end{theorem}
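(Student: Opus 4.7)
The plan is to recast Theorem \ref{thm:3.12} as a sheaf-cohomology vanishing on the total space $Z = K_\mathbb{C} \times_Q \mathfrak{o}$, and then exploit the fact, special to the complex case, that $Z$ maps properly and birationally onto a complex nilpotent orbit closure whose normalization has rational singularities.

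First I would identify $\bigoplus_p (Ind_Q^G)^p(S(\mathfrak{o})^*)$ with $\bigoplus_p H^p(Z, \mathcal{O}_Z)$. The vector bundle projection $q : Z \to K_\mathbb{C}/Q$ is affine, so $R^j q_* \mathcal{O}_Z = 0$ for $j > 0$, while $q_* \mathcal{O}_Z \cong \bigoplus_k K_\mathbb{C} \times_Q S^k(\mathfrak{o}^*)$ as equivariant coherent sheaves. The Leray spectral sequence for $q$ then collapses to $H^p(Z, \mathcal{O}_Z) \cong \bigoplus_k H^p(K_\mathbb{C}/Q, K_\mathbb{C} \times_Q S^k(\mathfrak{o}^*)) = (Ind_Q^G)^p(S(\mathfrak{o})^*)$, since $(Ind_Q^G)^p$ is by definition the sheaf cohomology of the associated bundle on $K_\mathbb{C}/Q$. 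Thus it suffices to prove $H^p(Z, \mathcal{O}_Z) = 0$ for every $p > 0$.

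Next I would invoke the theorem preceding Theorem \ref{thm:3.7}: $\pi : Z \to \overline{K_\mathbb{C} \cdot e}$ is proper and birational, and $Z$ is smooth as a homogeneous vector bundle over a partial flag variety. Consequently $\pi$ factors uniquely as $Z \to \widetilde{\overline{K_\mathbb{C} \cdot e}} \to \overline{K_\mathbb{C} \cdot e}$, with the first map a resolution of singularities and the second the normalization. Under the identifications $K_\mathbb{C} \cong G$ and $\mathfrak{p} \cong \mathfrak{g}$ established in the reduction of Conjecture \ref{conj:1.1} to Conjecture \ref{conj:1.2}, $\overline{K_\mathbb{C} \cdot e}$ becomes the closure of an adjoint nilpotent $G$-orbit inside $\mathfrak{g}$. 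At this point I would apply the theorem of Hinich and, independently, Panyushev: for any complex reductive $G$, the normalization of any nilpotent adjoint orbit closure in $\mathfrak{g}$ has rational singularities.

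Rational singularities of $\widetilde{\overline{K_\mathbb{C} \cdot e}}$, smoothness of $Z$, and properness-and-birationality of the resolution together give $R \pi_* \mathcal{O}_Z = \mathcal{O}_{\widetilde{\overline{K_\mathbb{C} \cdot e}}}$. Since $\overline{K_\mathbb{C} \cdot e}$ is affine (as a closed subvariety of $\mathfrak{p}$), its normalization is also affine, and the Leray spectral sequence for $\pi$ collapses to $H^p(Z, \mathcal{O}_Z) \cong H^p(\widetilde{\overline{K_\mathbb{C} \cdot e}}, \mathcal{O}) = 0$ for $p > 0$, completing the proof. The main obstacle is the rational singularities step, which rests on the nontrivial Hinich--Panyushev theorem; this is precisely where the complex hypothesis is essential, since for $K_\mathbb{C}$-orbits on $\mathfrak{p}$ arising from a genuinely real $G$ no comparable uniform result is available.
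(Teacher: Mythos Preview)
Your argument is correct and is precisely the standard proof of this vanishing theorem: reduce to $H^p(Z,\mathcal{O}_Z)$ via the affine bundle $Z \to K_\mathbb{C}/Q$, then use that $Z$ resolves the normalization of a complex nilpotent orbit closure, which has rational singularities by Hinich--Panyushev, and conclude by affineness of the normalization.

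There is nothing to compare against in the paper itself: Theorem~\ref{thm:3.12} is stated there without proof, as a known input (the surrounding results are drawn from \cite{E8}, where exactly this rational-singularities argument is given). So your proposal is not a different route; it \emph{is} the proof the paper is implicitly quoting. Your closing remark is also on point: the Hinich--Panyushev theorem concerns adjoint nilpotent orbit closures in $\mathfrak{g}$, and it is the identification $K_\mathbb{C}\cdot e = G\cdot e \subset \mathfrak{g}$ available only in the complex case that makes it applicable here.
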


Combining these results, we are led to the following useful theorem for us to calculate the ring of regular functions on orbits.
\begin{proposition} \label{prop:1.1}
Suppose $G$ is complex. Fix a maximal torus $H_{\mathbb{C}} \subset L_{\mathbb{C}}$ and a system of positive roots $\Delta^+ (\mathfrak{l}, \mathfrak{h})$. Extend this to a system of positive roots for $H_{\mathbb{C}} $ in $K_{\mathbb{C}}$ as above, and denote $\rho_c$ as the half sum of the positive roots. Fix a $K_{\mathbb{C}}$-dominant weight $\lambda \in X^*(H_{\mathbb{C}})$, and write $V_{\lambda}$ for the corresponding irreducible representation of $K_{\mathbb{C}}$. Then the multiplicity of $V_{\lambda}^*$ in the ring of regular functions on the normalization of $\overline{K_{\mathbb{C}} \cdot e}$ is equal to
$$\sum_w sgn(w) \cdot  (\text{multiplicity of }E_{w(\lambda + \rho_c) - \rho_c} \text{ in } S(\mathfrak{o})),$$
where $w \in W(\mathfrak{k}, \mathfrak{h})$ such that $w(\lambda + \rho_c) -\rho_c$ is $L_{\mathbb{C}}$ - dominant.
\end{proposition}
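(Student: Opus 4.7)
The strategy is essentially an assembly: Proposition \ref{prop:1.1} follows by threading four pieces of machinery already stated (the identification of $R(Z)$ with the normalization, Theorem \ref{thm:3.7}, the Bott-Borel-Weil reformulation, and the vanishing Theorem \ref{thm:3.12}) through a single chain of identifications. No new geometric or representation-theoretic input is required; what I would be careful about is the bookkeeping of duals and gradings.

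First I would invoke Theorem \ref{thm:3.7}, which gives
$$R(Z) \;\cong\; \mathrm{Ind}_Q^{K_\mathbb{C}}\bigl(R(\mathfrak{o})\bigr) \;\cong\; \sum_{k=0}^{\infty} \mathrm{Ind}_Q^{K_\mathbb{C}}\bigl(S^k(\mathfrak{o}^*)\bigr),$$
together with the preceding theorem identifying $R(Z)$ with the ring of regular functions on the normalization of $\overline{K_\mathbb{C}\cdot e}$. Thus the multiplicity of $V_\lambda^*$ in this normalization decomposes as the sum over $k$ of its multiplicity in $\mathrm{Ind}_Q^{K_\mathbb{C}}\bigl(S^k(\mathfrak{o}^*)\bigr)$.

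Next, I would apply the Bott-Borel-Weil reformulation term by term. Each $S^k(\mathfrak{o})$ is a finite-dimensional algebraic representation of $Q$, so taking $E=S^k(\mathfrak{o})$ (and using $S^k(\mathfrak{o})^* \cong S^k(\mathfrak{o}^*)$ as $Q$-representations) yields that the multiplicity of $V_\lambda^*$ in $\sum_p (-1)^p (\mathrm{Ind}_Q^{K_\mathbb{C}})^p\bigl(S^k(\mathfrak{o}^*)\bigr)$ equals
$$\sum_w \mathrm{sgn}(w) \cdot \bigl(\text{multiplicity of } E_{w(\lambda+\rho_c)-\rho_c} \text{ in } S^k(\mathfrak{o})\bigr).$$
The vanishing Theorem \ref{thm:3.12} then collapses this alternating sum to its $p=0$ term, which is precisely $\mathrm{Ind}_Q^{K_\mathbb{C}}\bigl(S^k(\mathfrak{o}^*)\bigr)$. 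Summing over $k$ produces the claimed formula.

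The main (minor) obstacle is verifying that Theorem \ref{thm:3.12}, formulated for the full symmetric algebra $S(\mathfrak{o})^*$, may be applied degree-by-degree so as to be compatible with the BBW formula, which is stated for finite-dimensional $E$. This is automatic because $S(\mathfrak{o}^*) = \bigoplus_k S^k(\mathfrak{o}^*)$ as $Q$-representations, each graded piece is a $Q$-subrepresentation, and the higher induction functors commute with direct sums; hence the vanishing on each $S^k(\mathfrak{o}^*)$ follows from the vanishing on the whole. Beyond this routine check, the argument is purely a concatenation of the earlier results.
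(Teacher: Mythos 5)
Your proposal is correct and matches the paper's own (implicit) argument: the paper introduces Proposition \ref{prop:1.1} simply with ``combining these results,'' meaning exactly the concatenation you spell out of the normalization identification, Theorem \ref{thm:3.7}, the Bott--Borel--Weil reformulation, and the vanishing Theorem \ref{thm:3.12}. Your extra care about applying the vanishing degree-by-degree is a reasonable refinement of the same route, not a departure from it.
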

\section{The Model Orbit in $G_2(\mathbb{C})$} 

\subsection{The ring of regular functions} \label{sec:3.1}
Let $G$ be a complex simply connected simple Lie group of type $G_2$ with $\mathfrak{g}_0$ being its Lie algebra. Fix a Cartan subgroup $H$, thus the corresponding Cartan subalgebra $\mathfrak{h}_0$ of $\mathfrak{g}_0$ and a system of positive roots $\Phi ^+$. The corresponding base is given by $\Delta = \{ \alpha, \beta \}$ with $\alpha$ being the short root and $\beta$ being the long root. Let $h_{\alpha}$, $h_{\beta}$ be the coroot of $\alpha$, $\beta$ respectively.

Let $\mathcal{O}$ be the unique nilpotent orbit of complex dimension 8 in $\mathfrak{g}_0$. By the Jacobson-Morozov Theorem, any representative $e$ of $\mathcal{O}$ embeds into a standard triple $\{h, e, f\}$. By conjugation, we can assume $h \in \mathfrak{h}_0$ is $\Delta$-dominant and integral. This choice of $h$ is the one used to define the weighted Dynkin diagram of the orbit and it is known that the corresponding weights are $\alpha (h) = 1$, $\beta (h) =0$. In other words, we have $h = 2 h_{\alpha} + 3 h_{\beta}$. Define
$$L = \{ g \in G \,|\, \text{Ad}(g)h =h\}.$$
Let $\mathfrak{l}_0$ be the Lie algebra of $L$. According to the properties of $h$, we have 
$$\mathfrak{l}_0 = \mathfrak{h}_0 \oplus \mathfrak{g}_{\beta} \oplus \mathfrak{g}_{-\beta}.$$
More precisely $\mathfrak{g}_0$ can be decomposed into
$\mathfrak{g}_0 = \bigoplus_{i=-3}^{3} \mathfrak{g}_0 (i)$ with
respect to the action of ad\,$h$, where
\begin{align*}
\mathfrak{g}_0 (0) &= \mathfrak{l}_0 \\
\mathfrak{g}_0 (1) &= \mathfrak{g}_{\alpha + \beta} \oplus \mathfrak{g}_{\alpha}\\
\mathfrak{g}_0 (2) &= \mathfrak{g}_{2\alpha + \beta} \\
\mathfrak{g}_0 (3) &= \mathfrak{g}_{3\alpha + 2\beta} \oplus \mathfrak{g}_{3 \alpha + \beta}\\
\mathfrak{g}_0 (-1) &= \mathfrak{g}_{-\alpha - \beta} \oplus \mathfrak{g}_{-\alpha}\\
\mathfrak{g}_0 (-2) &= \mathfrak{g}_{-2\alpha - \beta} \\
\mathfrak{g}_0 (-3) &= \mathfrak{g}_{-3\alpha - 2\beta} \oplus
\mathfrak{g}_{-3 \alpha - \beta}.
\end{align*}
Define
$$\mathfrak{o}_0 = \mathfrak{g}_0 (2) \oplus \mathfrak{g}_0(3)= \mathfrak{g}_{2\alpha + \beta} \oplus \mathfrak{g}_{3\alpha + 2\beta} \oplus \mathfrak{g}_{3 \alpha + \beta}.$$

Let $X^*(H)$ be the weight lattice and fix a $G$-dominant weight $\lambda \in X^*(H)$. Write $V_{\lambda}$ for the corresponding irreducible representation of $G$ with highest weight $\lambda$. According to Proposition \ref{prop:1.1}, the multiplicity of $V_{\lambda}^*$ in the ring of regular functions on $G \cdot e $ is equal to
$$\sum sgn(w) \cdot (\text{ multiplicity of }E_{w(\lambda +\rho) -\rho} \text{ in } S(\mathfrak{o}_0)),$$
where the sum is over $w \in W$ such that $w(\lambda + \rho) - \rho$ is $L$-dominant, and $E_{w(\lambda +\rho) -\rho} $ is the irreducible algebraic representation of $L$ of highest weight $w(\lambda +\rho) -\rho$. 

Thus, it suffices to calculate $S(\mathfrak{o}_0)$ as a representation of $L$.

\subsection{Decomposition of $S(\mathfrak{o}_0)$}
In this section, we will work in the setting in Section \ref{sec:3.1}, and calculate $S(\mathfrak{o}_0)$ as a representation of $L$. Note that $\mathfrak{l}_0 \cong \mathfrak{gl}(2)$ and we fix an isomorphism 
$$j: \mathfrak{l}_0 \rightarrow \mathfrak{gl} (2)$$
carrying $h_{\beta}, e_{\beta}, f_{\beta}$ to the standard triple in $\mathfrak{sl}(2) \subset \mathfrak{gl}(2)$, and $h$ to the identity matrix. 
Via this identification, we will denote a weight $\lambda \in \mathfrak{h}_0^*$ by the pair $(\lambda(h),\lambda(h_\beta))$ so that the first value gives the action of the identity and the second shows the weight of the standard $\mathfrak{sl}(2)$ subalgebra.

Write $(\tau, \mathbb{C}^2_{\tau})$ for the standard representation of $\mathfrak{l}_0$; its weights are
$ (1,1), (1,-1) .$
The one-dimensional determinant character $(\det, \mathbb{C}_{\det})$ has weight
$ (2,0).$
Thus, $\tau$ twisted by $\det$ has weights
$ (3,1), (3,-1).$

As an algebraic representation of a reductive group is determined up to equivalence by its set of weights with multiplicities, we have
$$\mathfrak{o}_0 = \mathbb{C}_{\det} \oplus (\mathbb{C}_{\det} \otimes \mathbb{C}^2_{\tau} ).$$
Since $\mathfrak{sl}(2)$ acts trivially on $\mathbb{C}_{\det^k}$ for all $k$, $S^q(\mathbb{C}_{\det} \otimes \mathbb{C}^2_\tau)$ is isomorphic to $S^q(\mathbb{C}^2_\tau)$ as $\mathfrak{sl}(2)$-representations. Therefore $S^q(\mathbb{C}_{\det} \otimes \mathbb{C}^2_\tau)$ must be of the form $\mathbb{C}_{\det^k} \otimes S^q(\mathbb{C}^2_\tau)$ for some $k \in \mathbb{N}$.

Note that $h$ lies in the center of $\mathfrak{l}_0$, it must act as scalar multiplication on each representation. In particular, we see that $h$ acts on $\mathbb{C}_{\det} \otimes \mathbb{C}^2_\tau$ as multiplication by 3. More generally for any indecomposable vector $(v \otimes w_1) \cdots (v \otimes w_q) \in S^q (\mathbb{C}_{\det} \otimes \mathbb{C}^2_{\tau})$, $v \in \mathbb{C}_{\det}, w_i \in \mathbb{C}^2_\tau$, we have
\begin{align*}
&h\left((v\otimes w_1) \cdots (v \otimes w_q)\right)\\
= & \sum_i (v\otimes w_1) \cdots h(v \otimes w_i)\cdots (v \otimes w_q)\\
= & \sum_i (v\otimes w_1) \cdots (hv \otimes w_i + v \otimes h w_i)\cdots (v \otimes w_q)\\
= & 3 \sum_i (v\otimes w_1) \cdots  (v \otimes w_q) \\
= & 3q (v\otimes w_1) \cdots (v \otimes w_q).
\end{align*}


Similarly, for $v \otimes (w_1 \cdots w_q) \in \mathbb{C}_{\det^k}
\otimes S^q (\mathbb{C}^2_{\tau})$, we have
\begin{align*}
& h(v \otimes (w_1 \cdots w_q))\\
=& hv \otimes (w_1 \cdots w_q) + v \otimes h(w_1 \cdots w_q)\\
=& 2kv \otimes (w_1 \cdots w_q)+ v \otimes \sum_i\left(w_1 \cdots (hw_i) \cdots w_q\right)\\
=& 2k \otimes (w_1 \cdots w_q) + q v \otimes (w_1 \cdots w_q)\\
=& (2k+q) v \otimes (w_1 \cdots w_q)
\end{align*}
It forces that $3q = 2k+q$, i.e. $k = q$.

Hence
\begin{align*}
S^k (\mathfrak{o}_0) &= S^k (\mathbb{C}_{\det} \oplus \mathbb{C}_{\det} \otimes \mathbb{C}^2_{\tau} ) \\
&= \bigoplus_{p+q = k} S^p\mathbb{C}_{\det} \otimes S^q \left(\mathbb{C}_{\det} \otimes \mathbb{C}^2_{\tau} \right) \\
&= \bigoplus_{p+q = k} \mathbb{C}_{\det^p} \otimes \mathbb{C}_{\det^q } \otimes S^q (\mathbb{C}^2_{\tau}) \\
&= \bigoplus_{p+q = k} \mathbb{C}_{\det^k} \otimes S^q (\mathbb{C}^2_{\tau})\\
&= \bigoplus_{q=0}^k \mathbb{C}_{\det^k} \otimes S^q
(\mathbb{C}^2_{\tau}).
\end{align*}

Finally, we get
$$S(\mathfrak{o}_0) \cong \bigoplus_{k \in \mathbb{N}} \bigoplus_{q=0}^k \mathbb{C}_{\det^k} \otimes S^q (\mathbb{C}^2_{\tau}).$$

It remains to calculate the multiplicities of some irreducible $L$-representations in $S(\mathfrak{o}_0)$. 

Suppose $\lambda= a \alpha + b \beta$ is the highest weight of
$\mathbb{C}_{\det^k} \otimes S^q (\mathbb{C}^2_{\tau})$ for
$\mathfrak{l}_0$. Then
$$\begin{array}{rll}
a &= \lambda (h) &= 2k +q\\
-a +2b &= \lambda (h _{\beta}) &=q.
\end{array}$$
If follows that $a = 2k+q$, $b = k+q$ and $\lambda = (2k +
q)\alpha + (q+k)\beta$.

Define $S(\lambda)$ to be the set of $w \in W$ for which $w(\lambda + \rho) - \rho $ is the highest weight of an irreducible $L$-representation appears in $S(\mathfrak{o}_0)$. From the above calculations, we see that
$$S(\lambda) = \{ w \in W | w(\lambda +\rho) - \rho = (2k + q)\alpha + (k+q)\beta, \text{for some integers $k \geq q \geq 0$} \}.$$

\begin{proposition} \label{thm:3.1}
$S(\lambda) = \{ 1\}$ for all $G$-dominant weight $\lambda$.
\end{proposition}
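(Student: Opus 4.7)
The plan is to translate the defining relation of $S(\lambda)$ into the fundamental weight basis of $\mathfrak{h}^*$. Writing $\omega_1 = 2\alpha+\beta$ and $\omega_2 = 3\alpha+2\beta$ for the fundamental weights dual to $\check\alpha, \check\beta$, a direct substitution yields
$$(2k+q)\alpha + (k+q)\beta = (k-q)\,\omega_1 + q\,\omega_2.$$
Hence the constraint $k\geq q\geq 0$ is equivalent to both coefficients of this weight, expressed in the $(\omega_1,\omega_2)$-basis, being non-negative integers. Writing $\lambda+\rho = p\,\omega_1+r\,\omega_2$ (with $p,r\geq 1$, since $\lambda$ is $G$-dominant) and $w(\lambda+\rho) = a_w\,\omega_1 + b_w\,\omega_2$, the condition $w\in S(\lambda)$ reduces to the single geometric requirement that $a_w\geq 1$ and $b_w\geq 1$; equivalently, $w(\lambda+\rho)$ must itself lie in the open dominant Weyl chamber.

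At this point I would invoke the standard fact that the open dominant chamber is a fundamental domain for $W$. Since $\lambda+\rho$ is strictly $G$-dominant, and since the open Weyl chambers of $W(G_2)$ are pairwise disjoint, any $w\neq 1$ moves $\lambda+\rho$ into a chamber distinct from the dominant one. Consequently there exists a simple coroot $\check\alpha_i\in\{\check\alpha,\check\beta\}$ with $\langle w(\lambda+\rho),\check\alpha_i\rangle < 0$, which forces either $a_w\leq 0$ or $b_w\leq 0$, contradicting $a_w,b_w\geq 1$. Hence no $w\neq 1$ can belong to $S(\lambda)$. The reverse inclusion $\{1\}\subseteq S(\lambda)$ is immediate: writing $\lambda = m\omega_1 + n\omega_2$ with $m,n\geq 0$ and choosing $q=n$, $k=m+n$ realizes $\lambda = (2k+q)\alpha+(k+q)\beta$ with $k\geq q\geq 0$.

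There is essentially no deep obstacle in this plan; its substance is the one-line change-of-basis identity converting the condition expressed in $(\alpha,\beta)$-coordinates into a non-negativity condition expressed in $(\omega_1,\omega_2)$-coordinates, after which the proposition reduces to a textbook property of Weyl chambers. The chief alternative would be a brute-force case analysis of all twelve elements of $W(G_2)\cong D_6$, computing $w(\lambda+\rho)-\rho$ in each case; this is feasible but laborious, and the fundamental-domain viewpoint bypasses it cleanly.
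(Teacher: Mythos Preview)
Your proof is correct and follows essentially the same approach as the paper. Both reduce the condition $w\in S(\lambda)$ to the requirement that $\langle w(\lambda+\rho),\check\alpha\rangle\geq 1$ and $\langle w(\lambda+\rho),\check\beta\rangle\geq 1$; you phrase this as ``$w(\lambda+\rho)$ lies in the open dominant chamber'' and invoke the fundamental-domain property, while the paper rewrites it as $w^{-1}(\alpha)>0$, $w^{-1}(\beta)>0$ and invokes the simply transitive action of $W$ on bases---these are equivalent formulations of the same fact.
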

\begin{proof}
For $w\in S(\lambda)$, $$w(\lambda +\rho) - \rho = (2k + q)\alpha
+ (k+q)\beta$$ for some integers $k \geq q \geq 0$. Then we have
\begin{align*}
(w(\lambda + \rho) - \rho)(h_{\beta} )= q\\
(w(\lambda + \rho) - \rho)(h) = 2k+q.
\end{align*}
The condition $q \geq 0$ is equivalent to
$$(w(\lambda + \rho)) (h_\beta) \geq 1,$$
which is also equivalent to $w^{-1} (\beta) >0$ since $\lambda +
\rho$ is always a strictly dominant weight. On the other hand,
notice that
\begin{align*}
& q \leq k\\
\Leftrightarrow \quad & 2q \leq \left(w(\lambda + \rho) - \rho\right) (h) - q \\
\Leftrightarrow \quad & (w(\lambda + \rho))(h_{\alpha}) \geq 1\\
\Leftrightarrow \quad & w^{-1} (\alpha) >0.
\end{align*}
Conversely, if $w \in W$ satisfies $w^{-1}(\alpha)>0$ and
$w^{-1}(\beta)>0$, define
\begin{align*}
q = \left(w(\lambda+\rho) -\rho \right) (h_\beta),\\
k = \left(w(\lambda+\rho) -\rho \right) (h_\alpha + h_\beta).
\end{align*}
As $w^{-1}(\alpha)>0, w^{-1}(\beta)>0$, clearly $q,k$ are positive
integers satisfying $k \geq q$. Moreover,
\begin{align*}
w(\lambda+\rho) -\rho &= \left(w(\lambda+\rho) -\rho \right)
(h_\alpha) w_\alpha + \left(w(\lambda+\rho) -\rho \right)
(h_\beta) w_\beta\\
&= (k-q) w_\alpha + q w_\beta\\
&= (k-q) (2 \alpha + \beta) + q (3\alpha + 2 \beta)\\
&= (2k + q)\alpha + (k+q)\beta
\end{align*}
It follows that $w\in S(\lambda)$. Finally, since the Weyl group
$W$ acts simply transitively on the set of bases, we have
$S(\lambda)=\{1\}$.
\end{proof}

Combining the results in Proposition \ref{prop:1.1} and Proposition \ref{thm:3.1}, we obtain

\begin{proposition} \label{prop:2}
Suppose $G$ is a complex algebraic group of type $G_2$ and $\mathcal{O}$ is the nilpotent orbit of complex dimension 8. The the algebra $R(\mathcal{O})$ of regular functions on $\mathcal{O}$ contains every irreducible (algebraic) representation of $G$ with multiplicity one.
\end{proposition}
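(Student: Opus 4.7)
The plan is to combine Proposition \ref{prop:1.1} with Proposition \ref{thm:3.1}. By the latter, only $w = 1$ contributes to the multiplicity formula, so the multiplicity of $V_\lambda^*$ in $R(\mathcal{O})$ collapses to the multiplicity of $E_{w(\lambda+\rho)-\rho} = E_\lambda$ in the $L$-representation $S(\mathfrak{o}_0)$. The task therefore reduces to checking that, for every $G$-dominant weight $\lambda$, the irreducible $L$-representation $E_\lambda$ occurs exactly once in the decomposition of $S(\mathfrak{o}_0)$ obtained in Section 3.2.

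Using the decomposition
$$S(\mathfrak{o}_0) \;\cong\; \bigoplus_{k \in \mathbb{N}} \bigoplus_{q=0}^{k} \mathbb{C}_{\det^k} \otimes S^q(\mathbb{C}^2_\tau),$$
each summand is irreducible as a representation of $L \cong GL(2)$, with highest weight $(2k+q)\alpha + (k+q)\beta$. Writing a candidate $\lambda = a\alpha + b\beta$, the system $2k+q = a$, $k+q = b$ has the unique solution $(k,q) = (a-b,\,2b-a)$, so the problem becomes: verify that $G$-dominance of $\lambda$ is exactly equivalent to the constraint $k \geq q \geq 0$ (with $k,q$ integers). This is where the $G_2$ root data enters. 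Using the Cartan integers $\alpha(h_\alpha)=\beta(h_\beta)=2$, $\alpha(h_\beta) = -1$, $\beta(h_\alpha) = -3$, dominance gives $\lambda(h_\alpha) = 2a - 3b \geq 0$ and $\lambda(h_\beta) = -a + 2b \geq 0$, which translates into $k \geq q$ and $q \geq 0$ respectively; the remaining condition $k \geq 0$ follows for free from $a \geq 3b/2 \geq b$. Since $G_2$ has root lattice equal to weight lattice, $a$ and $b$ are automatically integers.

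I do not expect a serious obstacle: Propositions \ref{prop:1.1} and \ref{thm:3.1} have already done the heavy lifting, and what remains is essentially a combinatorial bijection between $G$-dominant weights and the index set $\{(k,q) : k \geq q \geq 0\}$ parametrizing the irreducible summands of $S(\mathfrak{o}_0)$. The only delicacy is keeping the Cartan integers of $G_2$ straight so that the $L$-dominance and $G$-dominance conditions line up correctly. Finally, since the longest Weyl group element of $G_2$ acts as $-1$, one has $V_\lambda^* \cong V_\lambda$, so the statement "every $V_\lambda^*$ appears with multiplicity one" is equivalent to the formulation in the proposition.
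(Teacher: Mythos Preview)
Your proposal is correct and follows essentially the same approach as the paper: invoke Proposition~\ref{thm:3.1} to reduce the alternating sum in Proposition~\ref{prop:1.1} to the single term $w=1$, and then use the observation that the summands $\mathbb{C}_{\det^k}\otimes S^q(\mathbb{C}^2_\tau)$ have pairwise distinct highest weights. Your write-up is in fact more explicit than the paper's, spelling out the bijection between $G$-dominant weights and the index set $\{(k,q):k\geq q\geq 0\}$ and noting that $V_\lambda^*\cong V_\lambda$ for $G_2$; the paper leaves these points implicit in the statement $S(\lambda)=\{1\}$ (whose proof already contains the ``converse'' direction showing $1\in S(\lambda)$).
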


\begin{proof}
Observe that each summand $\mathbb{C}_{det^k}  \otimes  S^q (\mathbb{C}^2_\tau) $ in $S(\mathfrak{o}_0)$ has different highest weight. By Proposition \ref{thm:3.1}, the multiplicity of $V_{\lambda}$ in the ring of regular functions on $\mathcal{O}$ is equal to 
$$ \sum_{w \in S(\lambda)}  sgn(w)= sgn(1) = 1. $$
\end{proof}
\subsection{Quantization of the Model Orbit}  \label{sec:last}

The next step is to find out a unitary representation $\pi (\mathcal{O})$ attached to this nilpotent orbit $\mathcal{O}$. As proposed by McGovern in \cite{McG}, $\pi( \mathcal{O}) $ should be the irreducible spherical representation of $G$ with parameter one fourth the sum of positive roots. We will show that this irreducible spherical representation is unitary based on two other results. 

Instead of attaching the nilpotent orbit $\mathcal{O}$ to a representation $\pi$ directly, it would be easier to state the reverse direction first. 
Given a simple Lie algebra $\mathfrak{g}$. We would first replace $\pi$ by its annihilator Ann$(\pi)$, called primitive ideal, in the universal enveloping algebra $U(\mathfrak{g})$ of $\mathfrak{g}$. With a primitive ideal $I \subset U(\mathfrak{g})$ in mind, identify the graded algebra gr$U(\mathfrak{g})$ with the symmetric algebra $S(\mathfrak{g})$ via the Poincare-Birkhoff-Witt Theorem. Then the associated variety $\mathcal{V}(I)$  is defined to be the zeros of the associated graded ideal gr$I \subset S(\mathfrak{g})$ in $\mathfrak{g}^*$. A theorem by Borho, Brylinski and Joseph states that $ \mathcal{V}(I)$ is the closure $\overline{\mathcal{O}_I}$ of a unique nilpotent orbit $\mathcal{O}_I$. In this way, we say that $\mathcal{O}_I$ is associated to $I$. 

Therefore, we would like to have a closer look of the correspondence between nilpotent orbits and primitive ideals. Now consider a complex simple Lie algebra $\mathfrak{g}$ which is not of type $A_n$. Let $\mathcal{O}_{min}$ denote the unique minimal non-zero nilpotent orbit. Joseph has shown that there is a unique completely prime primitive ideal, Joseph ideal $J_0$, associated to $\mathcal{O}_{min}$. On the other hand, in our case, Joseph has shown that there are exactly two primitive ideals associated to the model orbit $\mathcal{O}$ in $G_2$.

As we would need to bring in the group $SO(7)$ later, let us denote $\mathfrak{g}_2$ as the Lie algebra of the complex Lie group $G$ of type $G_2$ in this section. Let $\alpha$, $\beta$ be simple roots for $\mathfrak{g}_2$ with $\alpha$ short and $\beta $ long; $\omega_1$, $\omega_2$ be the fundamental weights corresponding to $\alpha$ and $\beta$; $\rho $ be the half sum of positive roots. Then for $\lambda \in \mathfrak{h}^*$, let $M(\lambda)$ be the Verma module of highest weight $\lambda - \rho$, $L(\lambda)$ be the unique simple factor module of $M(\lambda)$ and set $J(\lambda) = $ Ann$ L(\lambda)$, the annihilator of $L(\lambda)$. The primitive ideals assicoated to $\mathcal{O}$ would then be $J_1 = J( \frac{1}{2} (\omega_1 + \omega_2))$ and $J_2 = J( \frac{1}{2} (5\omega_1 - \omega_2))$.

Then consider $G$ as the connected algebraic subgroup of $SO(7)$. Let $\mathfrak{so}(7)$ be the Lie algebra of $SO(7)$, $J_0$ be the Joseph ideal associated to the unique minimal orbit in $\mathfrak{so}(7)$. Levasseur and Smith have shown that $J_1= J_0 \cap U(\mathfrak{g}_2)$ from which we can well deinfe an natural embedding from $U(\mathfrak{g}_2)/ J_1$ to $ U(\mathfrak{so}(7))/ J_0$. Furthermore, they have shown the following identification
\begin{theorem} 
\cite{LS} 
The embedding $U(\mathfrak{g}_2)/ J_1 \rightarrow U(\mathfrak{so}(7))/ J_0$ is an equality.
\end{theorem}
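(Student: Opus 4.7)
The plan is to compare associated graded algebras and use the multiplicity-freeness established in Proposition \ref{prop:2}. Set $A := U(\mathfrak{g}_2)/J_1$ and $B := U(\mathfrak{so}(7))/J_0$. Both inherit the canonical PBW filtration, both are $G_2$-equivariant under the adjoint action, and the given embedding $\iota: A \hookrightarrow B$ is $G_2$-equivariant and filtration-preserving (this is essentially forced by $J_1 = J_0 \cap U(\mathfrak{g}_2)$). To prove $\iota$ is an equality it suffices to show that the induced injection $\operatorname{gr}(\iota): \operatorname{gr}(A) \hookrightarrow \operatorname{gr}(B)$ is an isomorphism.

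By the Borho-Brylinski-Joseph theorem, $\operatorname{gr}(A)$ is a $G_2$-equivariant graded subalgebra of $R(\overline{\mathcal{O}})$, and $\operatorname{gr}(B)$ is an $SO(7)$-equivariant graded subalgebra of $R(\overline{\mathcal{O}_{\min}})$, where $\mathcal{O}_{\min}$ denotes the minimal nilpotent orbit of $\mathfrak{so}(7)$. Since the Joseph ideal $J_0$ is completely prime, $\operatorname{gr}(B) = R(\overline{\mathcal{O}_{\min}})$. Using normality of $\overline{\mathcal{O}}$ together with the codimension-two boundary condition, one likewise identifies $\operatorname{gr}(A)$ with $R(\overline{\mathcal{O}}) = R(\mathcal{O})$. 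Combined with Proposition \ref{prop:2}, this shows $\operatorname{gr}(A)$ decomposes as the multiplicity-free sum $\bigoplus_\lambda V_\lambda$ ranging over all dominant weights of $G_2$.

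The next step is to decompose $\operatorname{gr}(B)$ as a $G_2$-module. I would use the classical $SO(7)$-decomposition of $R(\overline{\mathcal{O}_{\min}})$ as a multiplicity-free sum of Cartan powers attached to the minimal orbit, and then branch each summand to $G_2$ via the branching rule induced by $\mathbb{C}^7|_{G_2} = V_{\omega_1}$. The expectation is that this branching, summed over all graded pieces, assembles into exactly the same multiplicity-free $G_2$-decomposition $\bigoplus_\lambda V_\lambda$ as in the previous paragraph. Granted this matching, $\operatorname{gr}(\iota)$ is a $G_2$-equivariant injection between two $G_2$-modules whose multiplicity-free decompositions coincide, hence an isomorphism by Schur's lemma applied isotypic component by isotypic component; lifting from graded to filtered completes the proof.

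The main technical obstacle is the explicit branching calculation: one must carefully track the $SO(7) \downarrow G_2$ branching of every Cartan power of the $SO(7)$-irreducible controlling $R(\overline{\mathcal{O}_{\min}})$, and then verify that the total multiplicity of each $G_2$-irreducible across all graded pieces is exactly one. If the combinatorics prove unwieldy, an alternative geometric route is to establish the set-theoretic (indeed scheme-theoretic) identity $\overline{\mathcal{O}_{\min}(\mathfrak{so}(7))} \cap \mathfrak{g}_2 = \overline{\mathcal{O}}$ and deduce the comparison of $G_2$-decompositions from a direct Hilbert-series computation using the normality of both orbit closures.
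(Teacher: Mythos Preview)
The paper does not prove this theorem at all: it is quoted from Levasseur--Smith \cite{LS} as an external input, and then used together with Huang's unitarity result to deduce the main conclusion. So there is no ``paper's own proof'' to compare against; you are effectively sketching an independent proof of the cited result.

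On the proposal itself, the overall strategy (pass to associated graded, compare $G_2$-multiplicities) is reasonable, but there is a genuine gap in the step identifying $\operatorname{gr}(A)$ with $R(\overline{\mathcal{O}})$. From the Borho--Brylinski--Joseph theorem you only get that the \emph{variety} of $\operatorname{gr}(J_1)$ is $\overline{\mathcal{O}}$; this yields a \emph{surjection} $\operatorname{gr}(A)\twoheadrightarrow R(\overline{\mathcal{O}})$, not an inclusion as you write, and it is an isomorphism only if $\operatorname{gr}(J_1)$ is already radical. Normality of $\overline{\mathcal{O}}$ and the codimension-two boundary give $R(\overline{\mathcal{O}})=R(\mathcal{O})$, but they say nothing about reducedness of $\operatorname{gr}(J_1)$. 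The same issue arises for $B$: complete primeness of $J_0$ does not by itself force $\operatorname{gr}(J_0)$ to be radical. Without these identifications, the multiplicity-one information from Proposition~\ref{prop:2} controls $R(\mathcal{O})$, not $\operatorname{gr}(A)$, and your Schur argument does not close.

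Your ``alternative geometric route'' at the end---proving the scheme-theoretic intersection $\overline{\mathcal{O}_{\min}(\mathfrak{so}(7))}\cap\mathfrak{g}_2=\overline{\mathcal{O}}$ and comparing coordinate rings directly---is in fact much closer to what Levasseur and Smith actually do, and it sidesteps the reducedness problem above. If you want a self-contained argument, that is the line to pursue; the branching/multiplicity comparison you outline first would only become rigorous after you have independently established that $\operatorname{gr}(J_1)$ and $\operatorname{gr}(J_0)$ are reduced.
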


To establish the unitarizability of $U(\mathfrak{g}_2)/ J_1$, we can thus first consider the unitarizability of $U(\mathfrak{so}(7))/ J_0$ and then descends the unitary structure to $U(\mathfrak{g}_2)/ J_1$ canonically. Note that $U(\mathfrak{so}(7 ))/ J_0$ is the minimal representation of $\mathfrak{so}(7)$ and Huang has showed that the minimal representation of every simple complex Lie group is unitarizable.

\begin{theorem} \label{thm:huang}
\cite{Huang} Let $G$ be a simple complex Lie group. Then the minimal representation $V_{min} = U(\mathfrak{g})/ J_0$ is unitarizable. 
\end{theorem}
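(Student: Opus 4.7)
The plan is to equip the simple $(\mathfrak{g},K)$-module $V_{min} = U(\mathfrak{g})/J_0$ with a positive-definite invariant Hermitian form, where $K$ denotes a maximal compact subgroup of $G$ viewed as a real Lie group. First I would pin down $V_{min}$ inside the spherical dual of $G$. Since $V_{min}$ is irreducible, spherical, completely prime, and has associated variety equal to $\overline{\mathcal{O}_{min}}$, Joseph's description of $J_0$ determines its infinitesimal character uniquely; hence $V_{min}$ is isomorphic to the unique irreducible spherical subquotient of a specific minimal principal series $I(\nu) = \text{Ind}_B^G(\nu)$, for $B$ a Borel subgroup of $G$ and $\nu \in \mathfrak{h}^*$ the parameter singled out by this infinitesimal character. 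The other composition factors of $I(\nu)$ at this reducibility point have strictly larger annihilators, so they can be separated from $V_{min}$ by the associated variety filtration.

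Next I would construct the candidate inner product from the long intertwining operator $A(w_0, \nu): I(\nu) \to I(-\nu)$, composed with the natural Hermitian duality. Because $G$ is complex, the Kostant decomposition gives each $K$-isotypic component of $I(\nu)$ the structure of a Weyl group representation, and $A(w_0, \nu)$ factors compatibly as a product of operators corresponding to simple reflections. The signature of $A(w_0, \nu)$ on each $K$-type therefore reduces to a combinatorial problem on $W$-representations. Restricting the form to the subquotient $V_{min}$ and quotienting by its radical yields the candidate invariant Hermitian form.

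The main obstacle will be proving positive-definiteness of this form on all of $V_{min}$. For $G$ of classical type one can read off unitarity at the parameter $\nu$ directly from Barbasch's classification of the spherical unitary dual of complex classical groups. For the exceptional cases I would take one of two routes. The first is Vogan's petite $K$-type reduction: it suffices to check that $A(w_0, \nu)$ is positive semi-definite on a small explicit list of ``petite'' $K$-types (those whose signature data are detectable already inside a suitable split real subgroup), and positivity on this finite list forces positivity on the full $V_{min}$. The second, more geometric, route is to embed $G$ as one member of a reductive dual pair inside an auxiliary real symplectic group, realize $V_{min}$ as the theta lift of the trivial representation of the compact partner, and inherit the unitary structure directly from the oscillator representation. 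In either route, irreducibility of $V_{min}$ guarantees that once the form is shown to be positive semi-definite and nonzero on the spherical $K$-type, its radical is trivial and it descends to a genuine inner product on $V_{min}$.
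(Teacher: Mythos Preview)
The paper does not supply a proof of this theorem at all: it is stated purely as a citation to Huang's result and is used as a black box to deduce unitarizability of $U(\mathfrak{g}_2)/J_1$ via the Levasseur--Smith isomorphism. So there is no ``paper's own proof'' to compare your proposal against; anything you write here already goes beyond what the paper does.

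As a freestanding sketch your proposal is broadly reasonable, but one of your two routes for the exceptional cases has a genuine gap. The dual-pair route---embedding $G$ as a member of a reductive dual pair in an auxiliary metaplectic group with a \emph{compact} partner and pulling back the oscillator form---does not cover all exceptional complex types uniformly. In particular, complex $E_8$ has no such realization: there is no compact dual-pair partner whose theta lift is the minimal representation of complex $E_8$, so the unitary structure cannot be inherited that way. Huang's actual argument (which is what the paper is citing) handles the exceptional cases by different means, in several instances restricting the minimal representation of a larger group in which the given $G$ sits, rather than by a classical compact dual pair inside a symplectic group. Your first exceptional route via signature computations on petite $K$-types is in principle workable, but it is a substantial case-by-case calculation that you have not carried out, and the specific machinery you invoke postdates Huang's paper; if you pursue that line you should be explicit that you are giving an alternative proof rather than reconstructing the cited one.
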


Therefore, $U(\mathfrak{so}(7))/ J_0$ is unitarizable. By Theorem \ref{thm:huang}, $U(\mathfrak{g}_2)/J_1$, which is the restriction of $U(\mathfrak{so}(7 ))/J_0$ as a $G_2$ representation, is also unitarizable. By noting $\frac{1}{2} (\omega_1 + \omega_2) = \frac{1}{2} \rho$, we can use McGovern's result finally.

\begin{theorem} \label{thm:3.5}
\cite{McG} Consider $Q=U(\mathfrak{g_2})/ J(\rho/2)$, the quotient of $U(\mathfrak{g_2})$ by the maximal ideal of infinitesimal character $\rho /2$. Let $G$ act on $U(\mathfrak{g_2})/ J(\rho/2)$ by the adjoint representation. Then it is the sum of all irreducible representations of $K_{\mathbb{C}}$, each occurring with multiplicity one.
\end{theorem}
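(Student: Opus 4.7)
The plan is to reduce the statement to Proposition \ref{prop:2} by exhibiting a $G$-module isomorphism between $Q = U(\mathfrak{g}_2)/J(\rho/2)$ and the ring of regular functions $R(\mathcal{O})$. First I would check that the infinitesimal character matches up with the primitive ideal $J_1$ of Section \ref{sec:last}: for $\mathfrak{g}_2$ one has $\rho = \omega_1 + \omega_2$, so $\rho/2 = \tfrac{1}{2}(\omega_1 + \omega_2)$ and hence $J(\rho/2) = J_1$. Thus $Q = U(\mathfrak{g}_2)/J_1$, and this is exactly the quotient that Levasseur--Smith and Huang have already equipped, via the earlier arguments of Section \ref{sec:last}, with a completely prime, unitarizable structure.

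The key tool is the PBW filtration $\{U_n(\mathfrak{g}_2)\}$ of $U(\mathfrak{g}_2)$, which is $\mathrm{Ad}(G)$-stable and descends to a $G$-equivariant filtration on $Q$. Because $Q$ is locally finite under the adjoint action of the reductive group $G$, complete reducibility gives an isomorphism of $G$-modules $Q \cong \mathrm{gr}(Q) = S(\mathfrak{g}_2)/\mathrm{gr}(J_1)$. Using an invariant form to identify $S(\mathfrak{g}_2) = R(\mathfrak{g}_2^*) \cong R(\mathfrak{g}_2)$, the Borho--Brylinski--Joseph theorem recalled at the start of Section \ref{sec:last} identifies the associated variety $\mathcal{V}(J_1)$ with $\overline{\mathcal{O}}$, so $\sqrt{\mathrm{gr}(J_1)} = I(\overline{\mathcal{O}})$ and we obtain a canonical $G$-equivariant surjection
\[
\mathrm{gr}(Q) \twoheadrightarrow R(\overline{\mathcal{O}}).
\]
Because the model orbit $\mathcal{O}$ has a normal closure (and in any case $\overline{\mathcal{O}} \setminus \mathcal{O}$ has codimension $\geq 2$, so sections extend), the right-hand side equals $R(\mathcal{O})$. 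Applying Proposition \ref{prop:2} now bounds every irreducible $G$-type in $R(\overline{\mathcal{O}})$ by $1$, and one concludes that $Q$ decomposes as the multiplicity-free direct sum of all irreducibles of $G = K_{\mathbb{C}}$ as soon as the above surjection is an isomorphism.

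The main obstacle is therefore to upgrade this surjection to an equality, equivalently to show $\mathrm{gr}(J_1) = I(\overline{\mathcal{O}})$ rather than just an equality of radicals. The inclusion $\mathrm{gr}(J_1) \subseteq I(\overline{\mathcal{O}})$ is automatic from the associated variety. For the reverse inclusion I would invoke complete primeness: the Levasseur--Smith identification $U(\mathfrak{g}_2)/J_1 \cong U(\mathfrak{so}(7))/J_0$ together with Joseph's theorem that $J_0$ is completely prime forces $U(\mathfrak{g}_2)/J_1$ to be a domain, which makes $\mathrm{gr}(J_1)$ semiprime in this special unipotent situation and rules out any nilpotent surplus in $\mathrm{gr}(Q)$. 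Concretely, one can also close the gap numerically: Proposition \ref{prop:2} gives an exact description of the $G$-character of $R(\mathcal{O})$, and since $\mathrm{gr}(Q)$ surjects onto $R(\mathcal{O})$ the only way the surjection could fail to be injective is for some irreducible of $G$ to appear in $\mathrm{gr}(Q)$ with multiplicity $\geq 2$, which is ruled out by matching the graded characters term by term using the filtration degree. Once this identification is in place the theorem is immediate.
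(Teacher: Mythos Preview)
The paper does not prove this statement; it simply quotes it from McGovern \cite{McG}, so there is no ``paper's proof'' to compare against.  Your proposal is therefore an attempt to supply an independent argument, and the overall architecture (pass to $\mathrm{gr}(Q)$, compare with $R(\overline{\mathcal{O}})$, then invoke Proposition \ref{prop:2}) is the natural one.  However, the argument has a real gap exactly at the step you flag as ``the main obstacle'': upgrading the surjection $\mathrm{gr}(Q)\twoheadrightarrow R(\overline{\mathcal{O}})$ to an isomorphism, i.e.\ proving $\mathrm{gr}(J_1)=I(\overline{\mathcal{O}})$ rather than merely $\sqrt{\mathrm{gr}(J_1)}=I(\overline{\mathcal{O}})$.

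Neither of the two justifications you offer closes this gap.  First, complete primeness of $J_1$ says only that $U(\mathfrak{g}_2)/J_1$ is a domain; it does \emph{not} imply that the associated graded ring is reduced.  A filtered domain can perfectly well have nilpotents in its associated graded, so the sentence ``which makes $\mathrm{gr}(J_1)$ semiprime in this special unipotent situation'' is an assertion, not an argument.  Second, the ``numerical'' argument is circular: you propose to rule out extra multiplicity in $\mathrm{gr}(Q)$ by ``matching the graded characters term by term,'' but you have no independent computation of the $G$-character of $\mathrm{gr}(Q)$---that character is precisely what the theorem asserts.  Proposition \ref{prop:2} controls $R(\mathcal{O})$, not $\mathrm{gr}(Q)$, so it gives you a lower bound on multiplicities in $\mathrm{gr}(Q)$, not the upper bound you need.

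What is actually required here is an honest multiplicity (Bernstein degree / Goldie rank) argument: one must show that the multiplicity of $\mathrm{gr}(Q)$ along $\overline{\mathcal{O}}$ is exactly $1$, so that the kernel of the surjection is supported on the boundary and hence, combined with normality of $\overline{\mathcal{O}}$ and a depth/Cohen--Macaulay argument, must vanish.  This is essentially the content of McGovern's proof (and of the Levasseur--Smith analysis you cite), and it is not a formality.  If you want to keep your outline, you should replace the two hand-waving paragraphs by an explicit appeal to such a multiplicity-one statement and say where it is proved.
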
 

From Theorem \ref{thm:3.5}, we observe that the proposed representation $U(\mathfrak{g_2})/ J(\rho/2)$ is unitarizable and its $K$-finite vectors coincide with the ring of regular functions $R(\mathcal{O})$ as $K_\mathbb{C}$-representations. Hence we have established Conjecture \ref{conj:1.2} for the model orbit of complex $G_2$. As suggested also in \cite{E8}, we can use similar approach to tackle the corresponding real case, i.e. Conjecture \ref{conj:1.1} for the nilpotent orbit $\mathcal{O}_\mathbb{R}$ of real dimension $8$ for the noncompact real form $G_{2(2)}$. Although $G_{2(2)}$ is not a complex real form, the vanishing theorem, Theorem \ref{thm:3.12}, is still valid for the this orbit, which follows from a direct application of Bott-Borel-Weil Theorem. Henceforth Proposition \ref{prop:1.1} also works in this case. By similar calculation on the decomposition of $S(\mathfrak{o})$ and the corresponding $L_\mathbb{C}$-dominant conditions on weights, we observe that $\mathcal{O}_\mathbb{R}$ is an "almost model" orbit, in which every irreducible $K_\mathbb{C}$-representation appears with multiplicity at most one. But it is far from being model is the sense that not all irreducible $K_\mathbb{C}$-representations appear at all. Indeed, only those irreducible $K_\mathbb{C}$-representations whose highest weights are contained in the central cone of the $K_\mathbb{C}$-dominant weights generated by $-\alpha-\beta$ and $-\beta$ appear in the ring of regular functions $R(\mathcal{O}_\theta)$, where $\mathcal{O}_\theta$ is the $K_\mathbb{C}$-homogeneous space associated to $\mathcal{O}_\mathbb{R}$ via the Sekiguchi correspondence. 
However, it is unclear which irreducible unitary representation is to be attached to this orbit according to Conjecture \ref{conj:1.1} and that the fundamental group $\pi_1(\mathcal{O}_\theta)$ is nontrivial. For this reason, the details of the corresponding real case are not included here.

\subsection*{Acknowledgements}
The author is very grateful to Jing-song Huang for his guidance for this project.



\bibliographystyle{ieeetr}

\end{document}